\newtheorem{theorem}{Theorem}[section]
\newtheorem{lemma}[theorem]{Lemma}
\newtheorem{proposition}[theorem]{Proposition}
\newtheorem{definition}[theorem]{Definition}
\newtheorem{corollary}[theorem]{Corollary}
\newtheorem{remark}[theorem]{Remark}
\def\cR{{\mathcal R}}
\def\C{{\mathbb{C}}}
\newcommand{\RE} {{\rm I \kern-2.8pt R} }
\newcommand{\beasnum}{\begin{eqnarray}}
\newcommand{\eeasnum}{\end{eqnarray}}
\newcommand{\beas}{\begin{eqnarray*}}
\newcommand{\eeas}{\end{eqnarray*}}
\begin{document}

\begin{center}
{\Large {\bf Evolutionary Game Theory on Measure Spaces: Asymptotic Behavior of Solutions}} {\large
\textbf{\bigskip}}

\vspace{0.1in}

John Cleveland$^{\dag}$ and Azmy S. Ackleh$^{\ddag}$

\vspace{0.1in}

$^\dag$Department of Mathematics\\
Penn State University\\
University Park, State College, PA 16802\\

\vspace{0.1in}

$^\ddag$Department of Mathematics\\
University of Louisiana at Lafayette\\
Lafayette, Louisiana 70504-1010\\

\vspace{0.1in}
\end{center}

\begin{abstract}
  {In \cite{CLEVACK}  we formulated an evolutionary game theory model as a dynamical system on the state space of finite signed Borel measures under the weak* topology. We showed that this  model is well-posed, continuous and rich enough to treat selection and mutation, discrete and continuous initial conditions and to encompass all classical nonlinearities. The focus of this paper is  to extend the analysis to include the long-time behavior of solutions to the model. In particular, we we show that $M(Q)$, the finite signed Borel measures are asymptotically closed. This means that if the initial condition is a finite signed Borel measure and if the asymptotic limit of the model solution exists, then it will be a measure (note that function spaces such as $L^1(Q)$ and $C(Q)$ do not have this property). We also establish permanence results for the full replicator-mutator model. Then, we study the asymptotic analysis in the case where there is more than one strategy of a given fitness (a continuum of strategies of a given fitness), a case that often arises in applications.  To study this case our mathematical structure must include the ability to demonstrate the convergence of the model solution to a measure supported on a continuum of strategies.
For this purpose, we demonstrate how to perform
completions of the space of measures and how to use these completions
to formulate weak (generalized) asymptotic limits.  In particular, we show that for the pure replicator dynamics the (weak) solution of the dynamical system converges to a Dirac measure centered at the fittest strategy class; thus this Dirac measure is a globally attractive equilibrium point which is termed a continuously stable strategy
(CSS). It is also shown that in the discrete case of the pure
replicator dynamics and even for small perturbation of the pure
replicator dynamics (i.e., selection with small mutation) there exists a globally asymptotically stable
equilibrium.}\\

\noindent {\bf Key Words:} Evolutionary game models, space of finite
signed measure, well-posedness, long time behavior, permanence,
continuously stable strategy.\\

\noindent {\bf AMS Subject Classification:} 91A22, 34G20, 37C25,
92D25.
\end{abstract}

%1111111111111111111111111111111111111111111111111111111111111111111111111111111111111111111111
%1111111111111111111111111111111111111111111111111111111111111111111111111111111111111111111111
\section{ Introduction}
Game theory (GT) is an analytical tool used to model strategic
interactions or conflict situations. Any situation that requires one
to anticipate a rival's response to one's action is a potential
context for GT. Game theory comprises players, the strategies of
those players and a payoff for each player which is dependent upon
all strategies played. In \cite{Maynard2} Maynard Smith and Price
saw evolutionary biology as game theory and provided a critical link
to defining an evolutionary game. Essentially in evolutionary
biology there exists populations of \textbf{reproducing} organisms.
In the reproduction process some reproduce themselves exactly or due
to the biotic and abiotic environment some organisms increase in
more abundance than others and this implies that certain individuals
are being \textbf{selected} over other individuals. At other times
we have errors occur in this selection process and these are called
\textbf{mutations}. According to \cite{BrownMcGill} in the
evolutionary game, individual organisms are the players, their
heritable phenotypes or behaviors are their strategies, and their
(per capita) growth rates (fitness) are their payoffs. The insight
of John Maynard Smith was to equate fitness with payoff. So the
payoff to a particular strategy is defined as the expected numerical
contribution to the next generation (the growth rate or time rate of
change of the size of the strategy class).

So evolutionary game theory (EGT) is the creation and study of  mathematical models that describe how the strategy profiles
in games change over time due to mutation and selection
(replication).
Many researchers have recently devoted their attention to
the study of such EGT models (e.g.
\cite{AFT,AMFH,calsina,CALCAD,Hof,Hof2, MagWeb,SES}). To date almost
all EGT models are formulated as {\it density} models
\cite{AMFH,calsina,CALCAD, MagWeb,SES} with {\it linear} mutation
term. There are several formulations of pure selection or replicator
equation dynamics on measure spaces \cite{AFT,Bomze,CressHof}. The
recent formulations of selection-mutation balance equations on the
probability measures by \cite{EMP,KKO} are novel constructions.
These models describe the aging of an infinite population as a
process of accumulation of mutations in a genotype. The dynamical
equation which describes the system is of Kimura-Maruyama type. Thus
far in selection-mutation studies the mutation process has been
modeled using two different approaches: (1) A diffusion type
operator \cite{GVA,SES}; (2) An integral type operator that makes use of
a mutation kernel \cite{AFT,calsina,CALCAD,EMP, KKO}. Here we focus
on the second approach for modeling mutation.

It is well known that the solutions of many such models constructed on the state space of continuous or integrable functions converge to a Dirac measure centered at the fittest class \cite{AFT,AMFH,calsina,CALCAD,GVA,GR1,P,GR2}. This convergence is in the weak$^* $ topology \cite{AMFH}. Thus, the asymptotic limit of the solution is not in such
 state spaces; it is a measure.  How these measures arise naturally in a biological and adaptive dynamics environment is illustrated quite well in \cite[chpt.2]{P}. Thus, in \cite{AFT} the authors formulated a {\it pure} selection model with density dependent birth and mortality
function and a 2-dimensional trait space on the space of finite signed measures. They discussed
 existence-uniqueness of solutions and studied the long term behavior of the model.
Here, we substantially generalize the results in that paper in several directions: 1) we model selection and mutation
and we allow for the mutation kernel to be a family of measures (thus simultaneously treating discrete and continuous
strategy spaces) 2) we consider more general nonlinearities and thus the results apply to a wider class of models 3) we
allow for more than one fittest strategy.

To motivate our attempt of allowing more than one fittest strategy recall that in  \cite{AMFH} the authors considered the following logistic growth with pure selection (i.e., strategies
replicate themselves exactly and no mutation occurs)  model:
\begin{equation}
 \frac{d}{dt} x(t,q) = x(t,q) (
q_1 -q_2 X(t)), \label{logiseq}\end{equation}
 where $X(t) = \int_Q x(t,q) dq$ is the total population, $Q \subset \text{int}(\mathbb{R}_+^2)$ is compact
 and the state space is the set of continuous real valued functions
 $C(Q)$. Each $ q=(q_1, q_2) \in Q$ is a two tuple where $q_1$ is an
 intrinsic replication rate and $q_2$ is an intrinsic mortality
 rate. The fittest strategy was defined as the one that has the highest replication to mortality ratio, $\max_Q\{q_1/q_2\}$, ratio. The authors show that the solution converges to the Dirac mass centered at this ratio; provided there is a unique strategy with highest replication to  mortality ratio. In Figure 1 we present two examples of strategy spaces $Q$. One that
has a unique fittest strategy (left) and another that has a continuum of fittest strategies (right).

\begin{center}
{\bf Insert Figure 1 Here}
\end{center}

 The nonuniqueness property of the fittest strategy presents a mathematical difficulty that leads us to a notion of ``weak" or ``generalized"  asymptotic limit.  These weak limits live in
   a certain ``completion" of the space of finite signed measures
  and allow  for our understanding of
    the long time behavior of the pure selection model even when there is a continuum of fittest strategies. We believe this is the first time that this issue has been considered. All other
      studies including \cite{AFT} assume a unique fittest strategy.

  {In \cite{CLEVACK} we defined a evolutionary game theory model (EGT) model as an ordered triple $(Q,\mu(t),F(\mu(t)(Q)))$ subject to:
\begin{equation}\label{mconstraint}\frac{d}{dt}\mu(t)(E)=F(\mu(t)(Q))(E), \text{ for every}
~~E \in \mathcal{B}(Q). \end{equation} Here $Q$ is the strategy
(metric) space, $\mathcal{B}(Q)$ are the Borel sets on $Q$, $\mu(t)$
is a time dependent family of finite signed Borel measures on $Q$
and $F$ is a density dependent vector field such that $\mu$ and $F$
satisfy equation \eqref{mconstraint}.
 We also formulated the following EGT model as a dynamical system on the } state space of finite signed Borel measures under the  weak$^*$ topology:
\begin {equation} \left\{\begin{array}{ll}\label{M1}
 \displaystyle \frac{d}{dt}{\mu}(t; u, \gamma)(E) = \int_Q {f}_1(\mu(t)(Q), \hat q) \gamma(\hat q)(E)d\mu(t)(\hat q)\\
\hspace{1.2 in} - \displaystyle \int_E {f}_{2}(\mu(t)(Q),\hat q)
d\mu(t)(\hat q) =  {F} (\mu, \gamma)(E) \\
\mu(0; u,\gamma)=u.
\end{array}\right.\end{equation}
The purpose of this paper is to study the long term behavior of solutions to the model \eqref{M1}.

 {
This paper is organized as follows. In section 2 we mention some background definitions and information from \cite{CLEVACK} to help the reader  with a background context.  In section 3 we demonstrate that the finite signed Borel measures are asymptotically closed and provide some permanence results. In section 4 we tackle the problem of having nonunique fittest strategies. We motivate the discussion by looking at traditional ways of determining equilibria in evolutionary game theory and then we form a weak solution and study its asymptotic behavior for pure selection and small perturbation of pure selection.  In section 5, we generalize our methodology for handling pure selection to the general model. In section 6 we provide concluding remarks. Finally, for the convenience of the reader, we state known results which are needed for the development of our theory in the Appendix.}

%2222222222222222222222222222222222222222222222222222222222222222222222222222222222222222
%2222222222222222222222222222222222222222222

\section{Assumptions and Background Material}
 {In this section we state assumptions that we will use throughout the paper and we recall the main well-posedness result established in \cite{CLEVACK}.
\subsection{Birth and Mortality Rates}
Concerning the birth and mortality densities $f_1$ and $f_2$ we make assumptions similar
to those used in \cite{AFT,CLEVACK}:
\begin{itemize}
\item[(A1)] $f_1: \mathbb{R}_+ \times Q \rightarrow \mathbb{R}_+$ is locally Lipschitz
continuous in $X$ uniformly with respect to $q$, nonnegative, and nonincreasing  on $\mathbb{R}_+$ in $X$
and continuous in $q$.
\item[(A2)] $f_2: \mathbb{R}_+ \times Q \rightarrow \mathbb{R}_+$ is locally Lipschitz continuous in $X$ uniformly with respect to $q$,
nonnegative, nondecreasing on $\mathbb{R}_+$ in $X$, continuous in
$q$ and $ \inf_{q \in Q} {f_{2}(0, q)} =\varpi
>0 $. (This means that there is some inherent mortality not density
related)
\end{itemize}
These assumptions are of sufficient generality to capture many nonlinearities of classical population dynamics including Ricker,
Beverton-Holt, and Logistic (e.g.,  see \cite{AFT}).}
 {
\subsection{Reproductive Numbers and Carrying Capacities}
 We define $$\mathcal{R}(X, q) =\frac{f_1( X,q)}{f_2(X,q)}$$ to be
the reproductive number at total population size $X$, i.e., it is a
measure of the average amount of newborns contributed by an individual of
characteristic $q$ during its lifetime at population density $X$. Thus,
$\mathcal{R}(0,q)$ can be interpreted as an \emph{inherent
reproductive number}. Since $Q$ is compact, $ \mathcal{R}(0,\cdot)$ achieves a maximum and
minimum. Let $\mathfrak{Q},\mathfrak{q} $ be two points where $
\mathcal{R}(0,\cdot)$ achieves these extremum values, respectively.}

 {
From the assumptions  (A1) and (A2) above, the monotonicity properties of $f_1$ and $f_2$ imply that there exists a $K : Q \to  [0, \infty]$ given by}
{
\[
 K(q) = \begin{cases}
       \inf \{ K : \mathcal{R}(K,q)\leq 1\}   & \text{~~if ~~~} \mathcal{R} (0,q) \ge 1 \\
        0 & \text{~~if~~~ } \mathcal{R} (0,q) < 1  .
        \end{cases}
\]
Intuitively  $K(q)$ is the carrying capacity of the individuals using strategy $q$. We let $K(\mathfrak{Q})=K_{\mathfrak{Q}} $ and $K(\mathfrak{q})=k_{\mathfrak{q}}$.}

{ The problem is that under assumptions (A1)-(A2) the carrying capacity could be infinite. In studying the wellposedness in \cite{CLEVACK}
 we allowed $\infty$ as a value in the definition of $K$ above because some of the physical systems that
  we wish to model could in principle become unbounded \cite{Burger}. This unbounded case could be
  viewed as the fittest having an infinite carrying capacity. However, using \eqref{logiseq} as a model
   we notice several things. If the fittest individual has an infinite carrying capacity, then any
population evolving as in \eqref{logiseq} will grow unbounded and
will approach $\infty \delta_{\mathfrak{Q} }$, but this is not a
\textbf{finite} signed measure. Thus, in order to analyze the long time behavior of these models we must restrict carrying capacities to finite values. This prompts the following additional assumption.}

\begin{itemize}
\item [(A3)] the carrying capacity of the fittest class is finite, i.e.,  $ K(\mathfrak{Q}) < \infty .$

\end{itemize}

 {
Thinking of the outcome in \eqref{logiseq}, we see that in principle
there could exist an infinite sequence of strategies and densities
$(q_n, X_n) \rightarrow (\mathfrak{Q},K_\mathfrak{Q})$. In this
scenario, it is not clear who will survive. So we assume that
increases in population size do not change the ordering of fitness.
Hence we have the following further assumptions on
$\mathcal{R}$  and $K$.
\begin{itemize}
\item[(A4)] If $ q \neq \hat q $ and $\mathcal{R}(0,q) > \mathcal{R}(0,\hat
q)$, then $ \mathcal{R}(Y,q) > \mathcal{R}(Y,\hat q)$ if $Y \ge 0$.
Likewise, if $\mathcal{R}(0,q) = \mathcal{R}(0,\hat q)$, then $
\mathcal{R}(Y,q) =\mathcal{R}(Y,\hat q)$ if $Y \ge 0$.
\end{itemize}}

 {
 For $K$ we assume the following:
 \begin{itemize}
\item [(A5)]
\begin{itemize}
\item[] If $\mathcal{R}(0,\mathfrak{Q})\ge 1$, then there is a unique $
K_{\mathfrak{Q}} \in [0, \infty) $ such that
$\mathcal{R}(K_{\mathfrak{Q}},\mathfrak{Q})=1$.
 If $\mathcal{R}(0,\mathfrak{q})\ge 1$, then there is a unique
 $ k_{\mathfrak{q}} \in [0,\infty)$, such that $1= \mathcal{R}(k_{\mathfrak{q}},\mathfrak{q})$.
\end{itemize}
\end{itemize}}

\subsection{Main Theorem from \cite{CLEVACK}}
The following is the main well-posedness theorem taken from \cite{CLEVACK}. Here $ \mathcal{M}= \mathcal{M}(Q)$  are the finite signed Borel measures on $Q$, a compact Polish space, $\mathcal{M}_{V,+}$ represents the positive cone under the total variation topology and $\mathcal{M}_{w,+}$ represents the positive cone under the weak* topology.
 Let $\mathcal P_w= \mathcal P_w(Q) $ denote the probability
measures under the weak* topology and $ C^{po} =
C(Q,\mathcal{P}_w(Q))$, the continuous $P_w$ valued functions on $Q$ with the topology of uniform convergence.

\begin{theorem}\label{main}  Assume that (A1)-(A2) hold. There exists a continuous dynamical system $({\cal M}_{w,+
}, C^{po},\varphi)$ where $\varphi:
{\mathbb {R}} \times{\cal M} _{w,+} \times C^{po} \to {\cal M}_{w,+}
  $ satisfies the following:
\begin{enumerate}
\item  For fixed $ u, \gamma $, the mapping $t \mapsto \varphi(t;
u,\gamma)$ is continuously differentiable in total variation, i.e.,
$\varphi(\cdot, u, \gamma): {\mathbb {R}} \to {\cal M}_{V,+}$.
%$ \varphi|_{{\mathbb {R}} {\times ({W} _{+}})_w \times C^{po}}$ is continuous, where $
%W_+ \subseteq \mathcal{M}_+ $ is bounded in total variation.
 \item For fixed $ u, \gamma $, the mapping $t \mapsto \varphi(t; u,\gamma)$ is the unique \emph{solution} to

 \begin {equation} \left\{\begin{array}{ll}\label{M}
 \displaystyle \frac{d}{dt}{\mu}(t)(E) = \int_Q {f}_1(\mu(t)(Q), \hat q) \gamma(\hat q)(E)d\mu(t)(\hat q)\\
\hspace{1.2 in} - \displaystyle \int_E {f}_{2}(\mu(t)(Q),\hat q)
d\mu(t)(\hat q) =  {F} (\mu, \gamma)(E) \\
\mu(0)=u.
\end{array}\right.\end{equation}
\end{enumerate}
\end{theorem}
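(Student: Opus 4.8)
The plan is to treat \eqref{M} as an abstract Cauchy problem $\mu'(t)=F(\mu(t),\gamma)$ in the Banach space $(\mathcal{M}(Q),\|\cdot\|_{TV})$ and then to recover the weak* dynamical system from the strong (total variation) solution. First I would check that $F(\cdot,\gamma)$ genuinely maps $\mathcal{M}(Q)$ into itself. The death term $E\mapsto\int_E f_2(\mu(Q),\hat q)\,d\mu(\hat q)$ is plainly a finite signed measure, while for the birth term one must verify that $E\mapsto\int_Q f_1(\mu(Q),\hat q)\,\gamma(\hat q)(E)\,d\mu(\hat q)$ is countably additive and finite. This is where the hypothesis $\gamma\in C^{po}=C(Q,\mathcal{P}_w(Q))$ enters: continuity of $\hat q\mapsto\gamma(\hat q)$ into $\mathcal{P}_w$ makes $\hat q\mapsto\gamma(\hat q)(E)$ Borel measurable, so a monotone-class/Fubini argument produces a bona fide measure whose total variation is at most $\int_Q f_1(\mu(Q),\hat q)\,d|\mu|(\hat q)$, since each $\gamma(\hat q)$ is a probability measure.

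Next I would establish that $F(\cdot,\gamma)$ is locally Lipschitz in total variation, uniformly for $\gamma\in C^{po}$. Writing $X=\mu(Q)$ and $Y=\nu(Q)$, I would split $\|F(\mu,\gamma)-F(\nu,\gamma)\|_{TV}$ into a contribution from the change in the scalar arguments $X,Y$ (controlled by the local Lipschitz continuity of $f_1,f_2$ in the first variable from (A1)--(A2), together with $|X-Y|\le\|\mu-\nu\|_{TV}$) and a contribution from replacing $d\mu$ by $d\nu$ (controlled directly by $\|\mu-\nu\|_{TV}$ and the local boundedness of $f_1,f_2$ on the relevant mass range). With this estimate, the Picard--Lindel\"{o}f theorem in Banach space yields, for each $(u,\gamma)$, a unique maximal solution $t\mapsto\varphi(t;u,\gamma)$ that is continuously differentiable in total variation; this already gives item (1) and the local form of item (2).

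To upgrade to global solutions I would first show forward invariance of the positive cone $\mathcal{M}_{V,+}$: the death term is absolutely continuous with respect to $\mu$ and the birth term is nonnegative whenever $\mu$ is, so an integrating-factor/Duhamel comparison keeps $\varphi(t;u,\gamma)\ge 0$ for $u\ge 0$. For positive measures $\|\varphi(t;u,\gamma)\|_{TV}=\varphi(t;u,\gamma)(Q)=:X(t)$, and taking $E=Q$ in \eqref{M} (using $\gamma(\hat q)(Q)=1$) gives the scalar balance $X'(t)=\int_Q\big(f_1-f_2\big)(X(t),\hat q)\,d\mu(t)(\hat q)$. Using $\inf_{q} f_2(0,q)=\varpi>0$ together with the monotonicity of $f_1,f_2$ in $X$, I would derive a differential inequality of logistic type forcing $X(t)$ to remain bounded on $[0,\infty)$ (in fact ultimately bounded in terms of the largest carrying capacity). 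Boundedness of the total variation norm precludes finite-time blow-up, so the maximal solution is global and stays in $\mathcal{M}_{V,+}$; uniqueness then delivers the semiflow identity $\varphi(t+s;u,\gamma)=\varphi(t;\varphi(s;u,\gamma),\gamma)$.

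The last and, I expect, hardest step is the continuity of the triple $(\mathcal{M}_{w,+},C^{po},\varphi)$ as a dynamical system, since existence and uniqueness are obtained in the strong topology whereas continuity is demanded in the weak* topology. The plan is to confine the dynamics, via the uniform a priori bound on $X(t)$, to a norm-bounded subset of $\mathcal{M}_{V,+}$, which is weak*-compact and weak*-metrizable because $Q$ is a compact Polish space. On such a set a Gr\"{o}nwall estimate applied to the Lipschitz bound of the previous step yields continuous dependence of $\varphi(t;u,\gamma)$ on $(u,\gamma)$ in total variation, hence a fortiori in weak*; joint continuity in $t$ comes from the total-variation differentiability. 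The subtlety is that the Lipschitz constant depends on the mass range and on $\gamma$ only through supremal quantities that are uniform over $C^{po}$, so the estimates must be assembled carefully to obtain genuine joint continuity in $(t,u,\gamma)$ rather than separate continuity; this interplay between the total-variation estimates and the weak* topology in which the system is declared continuous is the main technical obstacle.
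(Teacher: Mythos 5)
The paper itself does not prove this theorem: it is quoted verbatim from the companion well-posedness paper \cite{CLEVACK}, so there is no in-paper argument to compare against. Judged on its own, your outline follows the standard route for such results (show $F(\cdot,\gamma)$ maps $\mathcal{M}(Q)$ to itself, prove a local Lipschitz estimate in total variation, run Picard--Lindel\"of in the Banach space $(\mathcal{M}(Q),\|\cdot\|_{TV})$, establish positivity and a priori mass bounds to globalize), and those portions are sound in substance; the measurability of $\hat q\mapsto\gamma(\hat q)(E)$ and the bound $\|F\|_{TV}\le\int_Q(f_1+f_2)\,d|\mu|$ go through as you say.

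The genuine gap is in your last step. The theorem asserts that $(\mathcal{M}_{w,+},C^{po},\varphi)$ is a \emph{continuous} dynamical system with the state space carrying the weak$^*$ topology, i.e.\ $\varphi$ must be continuous in the initial datum $u$ with respect to weak$^*$ convergence. Your Gr\"onwall estimate gives Lipschitz dependence of $\varphi(t;u,\gamma)$ on $u$ in total variation, and the phrase ``hence a fortiori in weak$^*$'' only weakens the topology on the \emph{target}; since the weak$^*$ topology on the \emph{source} is strictly coarser than the total variation topology, weak$^*$-to-weak$^*$ continuity is a strictly stronger statement than TV-to-TV continuity and does not follow from it. If $u_n\to u$ weak$^*$ but not in total variation (e.g.\ $\delta_{q_n}\to\delta_q$ with $q_n\to q$, $q_n\ne q$), your estimates say nothing about $\varphi(t;u_n,\gamma)$. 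Closing this requires a separate argument: restrict to a TV-bounded (hence weak$^*$-compact and metrizable) set, metrize weak$^*$ convergence there by a bounded-Lipschitz/Dudley-type metric or a countable family of test functions in $C(Q)$, and run the Gr\"onwall argument in that metric, which is where the continuity of $f_1,f_2$ in $q$ and the continuity of $\gamma:Q\to\mathcal{P}_w(Q)$ are actually needed. A secondary omission: the theorem defines $\varphi$ on all of $\mathbb{R}$, whereas your globalization argument (positivity plus logistic-type mass bounds) only covers forward time.
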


\section{Results for the Asymptotic Dynamics of the Full Model}
In this section we begin study of the long time behavior of the full model. In particular, we provide sufficiency for permanence and we show that $\mathcal{M}(Q)$ is what we will define as \textbf{asymptotically closed}.

\subsection{Permanence} {Permanence} here means
$$0< \liminf_{t \rightarrow \infty} \mu(t)(Q)\le  \limsup_{t
\rightarrow \infty} \mu(t)(Q) < \infty.$$

\begin{theorem}\label{BS}(Bounds for Solution)
Assume (A1)-(A5), then for \eqref{M} we have the following:
 \begin{equation}\label{limbound}
\min\{ k_{\mathfrak{q}}, \mu(0)(Q)\} \le \mu(t)(Q) \le \max
\{\mu(0)(Q), K_\mathfrak{Q} \}, \qquad \text{for all }~  t
~\geq 0,
  \end{equation}
 and \begin{equation}\label{limsupbound}
k_{\mathfrak{q}}  \leq \liminf_{t \to \infty} \mu (t)(Q) \leq
\limsup_{t \to \infty} \mu(t) (Q) \le K_{\mathfrak{Q}} .
\end{equation} Hence, if $k_{\mathfrak{q}} >0$
 then the population is permanent.

\end{theorem}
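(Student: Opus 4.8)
The plan is to collapse the measure-valued dynamics onto a scalar differential (in)equality for the total mass $X(t) := \mu(t)(Q)$ and then run a comparison/barrier argument. Setting $E = Q$ in \eqref{M} and using that each $\gamma(\hat q)$ is a probability measure, so $\gamma(\hat q)(Q) = 1$, the birth term simplifies; writing $f_1 - f_2 = f_2(\mathcal{R} - 1)$ one obtains
\[
\frac{d}{dt}X(t) = \int_Q f_2(X(t), \hat q)\big[\mathcal{R}(X(t), \hat q) - 1\big]\, d\mu(t)(\hat q).
\]
By Theorem~\ref{main} the map $t \mapsto \varphi(t; u, \gamma)$ is continuously differentiable in total variation and stays in the positive cone, so $X$ is $C^1$ and $\mu(t) \ge 0$; hence the sign of $\dot X$ is controlled entirely by the sign of the bracketed factor inside the integral.

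Next I would exploit the fitness ordering. Since $\mathfrak{Q}$ and $\mathfrak{q}$ maximize and minimize $\mathcal{R}(0,\cdot)$, assumption (A4) propagates these orderings to every density level, giving $\mathcal{R}(X, \mathfrak{q}) \le \mathcal{R}(X, \hat q) \le \mathcal{R}(X, \mathfrak{Q})$ for all $\hat q \in Q$ and $X \ge 0$. Combined with monotonicity of $\mathcal{R}$ in $X$ and the identities $\mathcal{R}(K_{\mathfrak{Q}}, \mathfrak{Q}) = 1$, $\mathcal{R}(k_{\mathfrak{q}}, \mathfrak{q}) = 1$ from (A5), this yields two sign facts: whenever $X(t) \ge K_{\mathfrak{Q}}$ each factor $\mathcal{R}(X, \hat q) - 1 \le \mathcal{R}(X, \mathfrak{Q}) - 1 \le 0$, so $\dot X \le 0$; and whenever $X(t) \le k_{\mathfrak{q}}$ each factor is $\ge 0$, so $\dot X \ge 0$. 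The instantaneous bounds \eqref{limbound} then follow from a standard last-exit argument: if $X$ ever strictly exceeded $\max\{X(0), K_{\mathfrak{Q}}\}$, take the last time it equals that level, on the subsequent interval $X > K_{\mathfrak{Q}}$ forces $\dot X \le 0$, contradicting the increase; the lower bound is symmetric via the $X \le k_{\mathfrak{q}}$ regime.

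For the asymptotic bounds \eqref{limsupbound} the sign information alone is insufficient, so I would upgrade it to a \emph{uniform} drift. The uniqueness in (A5) forces $\mathcal{R}(\cdot, \mathfrak{Q})$ to cross $1$ strictly, so for any $\eps > 0$ there is $\delta > 0$ with $\mathcal{R}(X, \mathfrak{Q}) - 1 \le -\delta$ once $X \ge K_{\mathfrak{Q}} + \eps$. Using the mortality floor $f_2 \ge \varpi > 0$ from (A2) gives $\dot X \le -\varpi \delta X \le -\varpi \delta (K_{\mathfrak{Q}} + \eps) < 0$ on that region, a drift bounded away from $0$; hence $X$ enters $[0, K_{\mathfrak{Q}} + \eps)$ in finite time and, by the barrier argument, cannot return, so letting $\eps \downarrow 0$ yields $\limsup X \le K_{\mathfrak{Q}}$. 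The symmetric argument near $k_{\mathfrak{q}}$, now using the instantaneous lower bound $X(t) \ge \min\{X(0), k_{\mathfrak{q}}\} > 0$ for nontrivial data to keep the positive drift uniform, gives $\liminf X \ge k_{\mathfrak{q}}$. Finiteness of the upper bound is exactly (A3), so $k_{\mathfrak{q}} > 0$ delivers $0 < k_{\mathfrak{q}} \le \liminf X \le \limsup X \le K_{\mathfrak{Q}} < \infty$, i.e.\ permanence.

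The main obstacle is precisely this last upgrade from sign to strict drift: one must rule out the integrand degenerating to zero on a persistent set, which is where the uniqueness in (A5) (forbidding a flat plateau of $\mathcal{R}(\cdot, \mathfrak{Q})$ at the value $1$) and the uniform floor $\varpi$ are indispensable. A secondary subtlety worth flagging is that the lower asymptotic bound genuinely needs $\mu(0) \neq 0$, since $X \equiv 0$ is a solution that would violate $\liminf X \ge k_{\mathfrak{q}}$ when $k_{\mathfrak{q}} > 0$.
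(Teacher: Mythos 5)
Your proposal is correct and follows essentially the same route as the paper: the same reduction to $\dot X = \int_Q f_2(X,\hat q)[\mathcal{R}(X,\hat q)-1]\,d\mu(t)(\hat q)$ via $\gamma(\hat q)(Q)=1$, the same sign dichotomy at the levels $K_{\mathfrak{Q}}$ and $k_{\mathfrak{q}}$ for \eqref{limbound}, and the same use of the mortality floor $\varpi$ to turn the sign into a strict drift for \eqref{limsupbound}. The only difference is organizational — you run a direct ``uniform drift forces finite-time entry into $[0,K_{\mathfrak{Q}}+\eps)$, then the barrier keeps it there'' argument where the paper argues by contradiction along sequences approaching the limit superior — and your flagged caveat about $\mu(0)=0$ for the lower asymptotic bound is a genuine edge case shared by the paper's own proof.
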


\begin{remark} It seems at first glance that $k_{\mathfrak{q}} >0$ or
 $( \mathcal{R}(0,\mathfrak{q}) >1)$ is too restrictive of an assumption
for proving persistence, i.e., $ \liminf_{t \rightarrow \infty}
\mu(t)(Q) >0 .$ However, if $k_{\mathfrak{q}} =0$  and $ \gamma(\hat
q) = \delta_{\mathfrak{q}}$ then it is an exercise to show that the
model converges to the zero measure even in setwise convergence.
\end{remark}

\begin{proof} We first prove the rightmost inequalities, i.e., those for $\max$ and $\limsup$.
 We will let $X(t)$ denote $ \mu(t)(Q)$ at
times in this section. First notice that
\begin{equation}\label{TOTPOPBOUND}
 \begin{array}{lll} \dot{X}(t) = \dot{\mu}(t)(\textbf{1}) &=&
\int_{Q}\Bigl[f_{1}(\mu(t)(Q),q)-f_{2}(\mu(t)(Q),q)\Bigr]d\mu(t)(
q) \\
&\leq&
 \int_{Q}\Bigl[\cR(\mu(t)(Q),\mathfrak{Q} )f_{2}(\mu(t)(Q),q)
-f_{2}(\mu(t)(Q),q)\Bigr]d\mu(t)( q)  \\
&=& \Bigl [\cR(\mu(t)(Q),\mathfrak{Q} ) -1 \Bigr]\int_Q
f_2(\mu(t)(Q), \hat q) d\mu(t)( \hat q).
\end{array}\end{equation}
 For the second inequality in \eqref{limbound} we have cases.  If $K_{\mathfrak{Q}}
=0$, then by \eqref{TOTPOPBOUND} we see that $\dot{X}(t) \leq 0$,
for all $t \geq 0 $. Now assume $0<K_{\mathfrak{Q}}  < \infty
$. Starting from \eqref{TOTPOPBOUND}, we see that if $X(t)>
K_{\mathfrak{Q}} $ then $ \dot{X}\leq 0$ therefore it follows from
basic analysis or \cite[Lemma A.6]{Thi03} that
\begin{equation}\label{limbound1} X(t)\leqslant \max\{ K_{\mathfrak{Q}},
X(0)\},
 \text { for all } t \ge~ 0 . \end{equation}

Likewise for \eqref{limsupbound} suppose $
0\le K_{\mathfrak{Q}}  < \infty $ and assume that
\[
 K_{\mathfrak{Q}}< K^\infty := \limsup_{t \to \infty} X(t).
\]
Then we claim that there is no sequence $X(t_j)$ that converges to
$K^\infty.$ Indeed if $X(t_j)$ is such a sequence, then there are
two cases. First suppose there is a $t_k$ such that
$K_{\mathfrak{Q}}  < X(t_k) < K^\infty$ for some time $t_k$. Then
$X(t_k)$ is an upper bound for $ \{X(t) : t \ge t_k\} $. Indeed if $ S =\{ t ~~|~~ t>t_k$ and  $ X(t) > X(t_k) \} \neq \emptyset $,  let
$T_{inf}= \inf S $. By continuity $ X(T_{inf})= X(t_k).$  Now for every $n$, there is a $T_n$ and $S_n$ such that
  \begin{enumerate}
  \item  $ X(T_n) > X(T_{inf})$ and $T_n \in (T_{inf},T_{\inf} + \frac{1}{n}]$
  \item   $X(T_n)-X(T_{inf})= \dot{X}(S_n)(T_n -T_{inf})$ where $T_{inf} < S_n < T_n$
  \end{enumerate}
    Hence $\dot{X}(S_n)> 0$
and by the inequalities in the second line preceding \eqref{limbound1},
 $X(S_n) \le K_{\mathfrak{Q}}.$
Continuing in this fashion we
get a sequence $\{S_{n} \}$ where $X(S_{n}) \le K_{\mathfrak{Q}}
$ and $\{S_{n} \} \rightarrow T_{inf} $. By continuity $X(T_{\inf}) \le
K_{\mathfrak{Q}}$. This is a contradiction.

Secondly, assume $ K^{\infty} \le X(t_j) $ for all $j$. Let $\xi
>0$ be such that $ K_{\mathfrak{Q}}  < \xi < K^\infty$. Then by \eqref{TOTPOPBOUND}, $\dot{X}(t_j)\le 0$
for all $j$. Hence using an argument similar to the one in the
previous paragraph and the fact that $ t_j \rightarrow \infty $ we
see that for any $T \ge t_1$, there is an $L$ such that  $ X(t_1)\ge
X(T)\ge X(t_L) \ge K^\infty > \xi$.  Then, by \eqref{TOTPOPBOUND},
$\dot{X}(T)\le [\mathcal{R}(\xi,\mathfrak{Q} ) -1]\varpi X(T) $, and
hence $X(t_j)$ decreases to $0$. This contradicts the fact that
$\inf \{X(t_j): j \in \mathbb{N}\} \ge K^{\infty} > K_{\mathfrak{Q}}
\ge 0$.

Now we prove the leftmost inequalities, i.e., those for $\min$ and $\liminf$.
Note that
\begin{equation}\label{TOTPOPBOUND2}
 \begin{array}{lll} \dot{X}(t) = \dot{\mu}(t)(\textbf{1}) &=&
\int_{Q}\Bigl[f_{1}(\mu(t)(Q),q)-f_{2}(\mu(t)(Q),q)\Bigr]d\mu(t)(
q) \\
&\geq&
 \int_{Q}\Bigl[\cR(\mu(t)(Q),\mathfrak{q} )f_{2}(\mu(t)(Q),q)
-f_{2}(\mu(t)(Q),q)\Bigr]d\mu(t)( q)  \\
&=& \Bigl [\cR(\mu(t)(Q),\mathfrak{q} ) -1 \Bigr]\int_Q
f_2(\mu(t)(Q), \hat q) d\mu(t)( \hat q).
\end{array}\end{equation}
 Once again, for the first inequality in \eqref{limbound} we have cases. The
case $k_{\mathfrak{q}} =0$ is trivial.  Now assume $0<k_{\mathfrak{q}}  <
\infty $. Starting from \eqref{TOTPOPBOUND2}, we see that if $X(t)<
k_{\mathfrak{q}} $ then $ \dot{X}\geq 0$ therefore it follows from
basic analysis or \cite[Lemma A.6]{Thi03} that
\begin{equation}\label{limbound1b} X(t)\geqslant \min\{ k_{\mathfrak{q}},
X(0)\},
 \text { for all } t \ge~ 0 . \end{equation}

Likewise for \eqref{limsupbound} we have cases. The
$k_{\mathfrak{q}} = 0$ case is trivial.  Otherwise suppose $ 0<
k_{\mathfrak{q}}  < \infty $ and assume that
\[
k_\infty := \liminf_{t \to \infty} X(t) < k_{\mathfrak{q}} .
\]
Then we claim that there is no sequence $X(t_j)$ that converges to
$k_\infty.$ Indeed if $X(t_j)$ is such a sequence, then there are
two cases. First suppose there is a $t_k$ such that $k_\infty
<X(t_k) < k_{\mathfrak{q}}$ for some time $t_k$. Then $X(t_k)$ is
a lower bound for $ \{X(t) : t \ge t_k\} $.
   Indeed if $ S =\{ t ~~|~~ t_k<t$ and  $ X(t) < X(t_k) \} \neq \emptyset $,  let
$T_{inf}= \inf S $. By continuity $ X(T_{inf})= X(t_k).$  Now for every $n$, there is a $T_n$ and $S_n$ such that
  \begin{enumerate}
  \item  $ X(T_n) < X(T_{inf})$ and $T_n \in (T_{inf},T_{inf} + \frac{1}{n}]$
  \item   $X(T_{n})-X(T_{inf})= \dot{X}(S_n)(T_{n} -T_{inf})$ where $T_{inf} < S_n < T_{n}$
  \end{enumerate}
    Hence $\dot{X}(S_n)< 0$
and by the inequalities in the second line following \eqref{TOTPOPBOUND2},
 $X(S_n) \ge k_{\mathfrak{q}}.$
Continuing in this fashion we
get a sequence $\{S_{n} \}$ where $X(S_{n}) \ge k_{\mathfrak{q}}
$ and $\{S_{n} \} \rightarrow T_{inf} $. By continuity $X(T_{inf}) \ge
k_{\mathfrak{q}}$. This is a contradiction.

Secondly, assume $X(t_j) \le k_{\infty}$ for all $j$. Let $\xi
>0$ be such that $ k_{\infty}<\xi< k_{\mathfrak{q}}$. Then by \eqref{TOTPOPBOUND2}, $\dot{X}(t_j)\ge 0$
for all $j$. Hence using an argument similar to the one in the
previous paragraph and the fact that $ t_j \rightarrow \infty $ we
see that for any $T \ge t_1$, there is an $L$ such that  $ X(t_1)\le
X(T)\le X(t_L) \le k_\infty < \xi$.  Then, by \eqref{TOTPOPBOUND2},
$\dot{X}(T)\ge [\mathcal{R}(\xi,\mathfrak{q} ) -1]\varpi X(T) $, and
hence $X(t_j)$ increases to $\infty$. This contradicts the fact that
$\sup \{X(t_j): j \in \mathbb{N}\} \le k_{\infty} < k_{\mathfrak{q}}
<\infty$.
\end{proof}

Next we prove a persistence result for the case where
$k_\mathfrak{q}$ is not necessarily positive. In order to obtain
population persistence a condition on the proportion of offspring of
strong traits (through the mutation parameter $\gamma$) is imposed.
In particular, this result states that for the population to persist
the proportion of offspring of individuals with strong traits that
belong to these strong traits must be large enough. In other words,
individuals with strong traits cannot afford to reproduce a large
portion of their offspring that carry weak traits if the population
is to survive.

\begin{theorem}\label{persistence}

Assume that (A1)-(A5) hold and that there exists some Borel set $ E \subseteq Q $, with the
following properties:

\begin{enumerate}

\item There exists an $\epsilon > 0$ such that $ \inf_{q \in E} \mathcal{R}(\epsilon,q) \gamma(q)(E) >
1$.
\item $ \mu(0)(E)>0$.
\end{enumerate} Then the population is uniformly weakly persistent, i.e., $\limsup_{t\to \infty} \mu(t)(Q) \ge \epsilon$.
\end{theorem}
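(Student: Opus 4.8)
The plan is to argue by contradiction in the standard fashion for weak persistence results. Suppose the conclusion fails, so that $\limsup_{t\to\infty}\mu(t)(Q)<\epsilon$. Writing $X(t)=\mu(t)(Q)$ as in Theorem \ref{BS}, this means there is a time $t_0$ after which $X(t)<\epsilon$ for all $t\ge t_0$. My goal will be to show that, on this time interval, the mass $\mu(t)(E)$ carried by the distinguished set $E$ must grow without bound, which is impossible since $0\le \mu(t)(E)\le X(t)<\epsilon$ because $\mu(t)$ is a positive measure and $E\subseteq Q$.

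The key computation is a differential inequality for $t\mapsto\mu(t)(E)$, which is continuously differentiable by Theorem \ref{main}. Evaluating \eqref{M} at the set $E$ and discarding the nonnegative contribution to the gain term coming from $Q\setminus E$ (legitimate because $f_1\ge 0$, $\gamma(\hat q)(E)\ge 0$, and $\mu(t)$ is positive), I obtain
\begin{equation*}
\frac{d}{dt}\mu(t)(E)\ \ge\ \int_E f_2(X(t),\hat q)\Bigl[\cR(X(t),\hat q)\,\gamma(\hat q)(E)-1\Bigr]\,d\mu(t)(\hat q).
\end{equation*}
For $t\ge t_0$ I use that $\cR(\cdot,\hat q)$ is nonincreasing in its first argument (a consequence of (A1)--(A2)), so that $X(t)<\epsilon$ gives $\cR(X(t),\hat q)\ge\cR(\epsilon,\hat q)$ for every $\hat q$. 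Setting $1+\delta:=\inf_{q\in E}\cR(\epsilon,q)\gamma(q)(E)>1$, hypothesis (1) then forces $\cR(X(t),\hat q)\gamma(\hat q)(E)-1\ge\delta$ for $\hat q\in E$; combined with $f_2(X(t),\hat q)\ge f_2(0,\hat q)\ge\varpi$ from (A2), this yields $\frac{d}{dt}\mu(t)(E)\ge\varpi\delta\,\mu(t)(E)$ on $[t_0,\infty)$.

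The one point requiring care, and the main obstacle, is to guarantee that $\mu(t_0)(E)>0$ so that the exponential comparison actually produces blow-up rather than the trivial bound $0\ge 0$. Hypothesis (2) only gives $\mu(0)(E)>0$, so I first propagate positivity forward: bounding the loss term crudely by $\int_E f_2\,d\mu(t)\le \|f_2\|_\infty\,\mu(t)(E)$ (the supremum being finite because $X(t)$ stays in the compact range furnished by Theorem \ref{BS} and $f_2$ is continuous), I get $\frac{d}{dt}\mu(t)(E)\ge -\|f_2\|_\infty\,\mu(t)(E)$, whence $\mu(t)(E)\ge\mu(0)(E)e^{-\|f_2\|_\infty t}>0$ for every finite $t$, in particular at $t_0$. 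Feeding this into the growth inequality gives $\mu(t)(E)\ge\mu(t_0)(E)\,e^{\varpi\delta(t-t_0)}\to\infty$, contradicting $\mu(t)(E)\le X(t)<\epsilon$. This contradiction establishes $\limsup_{t\to\infty}\mu(t)(Q)\ge\epsilon$, as claimed.
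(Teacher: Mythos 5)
Your proof is correct and follows essentially the same route as the paper's: assume $\limsup_{t\to\infty}\mu(t)(Q)<\epsilon$, derive the differential inequality $\frac{d}{dt}\mu(t)(E)\ge c\,\mu(t)(E)$ with $c>0$ from hypothesis (1) and the monotonicity in (A1)--(A2), and contradict the boundedness of $\mu(t)(E)$. Your explicit forward propagation of $\mu(t)(E)>0$ from $t=0$ to $t_0$ is a welcome extra step that the paper's proof leaves implicit when it invokes $\mu(0)(E)>0$ directly.
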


\begin{proof}
Assume that $\limsup_{t\to \infty} \mu(t)(Q)< \epsilon$. Then for
sufficiently large $t$,  since $f_1$ is decreasing and $f_2$ is
increasing in the first variable and $\mu(t)(Q)$ is nonnegative,
$$\begin{array}{ll} \dot \mu (t)(E) & \ge \int_E f_1(\epsilon,q) \gamma(q) (E) d\mu(t)( q) - \int_E f_2(\epsilon,q) d\mu(t)( q)\\
& = \int_E [{\cal R} (\epsilon,q) \gamma(q)(E) -1] f_2(\epsilon,q)
d\mu(t)(q) . \end{array}$$ Thus,
$$\dot \mu(t)(E) \ge [\inf_{q\in E} {\cal R}(\epsilon,q) \gamma(q)(E) -1] \int_E f_2(\epsilon,q) d\mu(t)(q).$$
Then,
$$\dot \mu(t)(E) \ge [\inf_{q\in E} {\cal R}(\epsilon,q)\gamma(q)(E)-1] \inf_{q\in E} f_2(\epsilon,q) \mu(t)(E).$$
So $\mu(t) (E) \to \infty$ since $\mu(0)(E) > 0$ and $\inf_{q\in E}
f_2(\epsilon,q)> 0$ by (A2). Contradiction to the assumption. (The bound on $\mu$ established in the previous lemma) Thus, uniform weak persistence
follows.
\end{proof}

\begin{remark}  One can apply persistence theory \cite[pg. 440]{Thi03} to show that $\mu$ in the
above result is uniformly persistent, i.e., $\liminf_{t\to \infty}
\mu(t)(Q) \ge \epsilon$. In fact, it is enough to recall from the
well-posedness result (Theorem \ref{main}) that the semiflow is
continuous in the weak* topology. Furthermore, since the solutions
have a bounded attractor in the total variation topology (Theorem
\ref{BS}) then they have a compact attractor in the weak* topology.
\end{remark}

\subsection{Measures are Asymptotically Closed}  {Suppose $ M_0 \subseteq \mathcal{M}(Q)$ is a linear subspace,
$(Q,\mu(t),F(\mu(t)(Q)))$ is a measure valued EGT model,  where $F : M_0 \rightarrow M_0 $. Then if
$\lim_{t \rightarrow \infty}\mu(t) =\xi$ (where $\xi$ is in some weak* topological closure of $\mathcal{M}(Q)$), implies $\xi \in M_0$, then $M_0$ is called asymptotically closed with respect to $F$.}

 {
\begin{theorem}\label{ACLOSED}  If $(Q,\mu(t),F(\mu(t)(Q)))$ is as in \eqref{M1}, then $\mathcal{M}_w(Q)$ is asymptotically closed with respect to $F$.
\end{theorem}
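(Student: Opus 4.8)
The plan is to reduce the statement to a single compactness fact: the entire forward trajectory $\{\mu(t) : t \ge 0\}$ is trapped inside one fixed weak* compact subset of $\mathcal{M}(Q)$, and such a set is weak* closed, so the limit $\xi$ cannot escape it. The uniform total-mass bound already established in Theorem \ref{BS} is precisely what furnishes this trapping region, so the new content of the theorem is mostly a matter of combining that bound with Banach--Alaoglu.

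In detail, I would first invoke the Riesz representation theorem, which (since $Q$ is a compact Polish space) identifies $\mathcal{M}(Q)$ with the dual $C(Q)^*$ and the model's weak* topology with the dual weak* topology. Because the dynamical system of Theorem \ref{main} takes values in the positive cone $\mathcal{M}_{w,+}$, each $\mu(t)$ is a nonnegative measure, so its total variation coincides with its total mass: $\|\mu(t)\| = \mu(t)(Q)$. Theorem \ref{BS} then yields the uniform bound $\|\mu(t)\| = \mu(t)(Q) \le R$ for all $t \ge 0$, where $R := \max\{\mu(0)(Q), K_{\mathfrak{Q}}\}$. Hence the trajectory lies in the closed ball $B_R := \{\nu \in \mathcal{M}(Q) : \|\nu\| \le R\}$, and by Banach--Alaoglu the ball $B_R$ is weak* compact.

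The final step is purely topological. A compact subset of a Hausdorff space is closed; since the ``completion'' in which $\xi$ is assumed to live carries a Hausdorff topology whose restriction to $\mathcal{M}(Q)$ is the weak* topology, the compact ball $B_R$ stays closed when viewed inside that completion. The net $t \mapsto \mu(t)$ lies entirely in $B_R$ and converges to $\xi$, and a convergent net in a closed set has its limit in the set, so $\xi \in B_R \subseteq \mathcal{M}(Q)$. As $M_0 = \mathcal{M}_w(Q)$, this is exactly the assertion that $\mathcal{M}_w(Q)$ is asymptotically closed with respect to $F$. (This also clarifies the contrast drawn in the introduction: for $L^1(Q)$ or $C(Q)$ there is no analogous norm-bounded weak* compact ball inside the space, which is why those spaces fail to be asymptotically closed.)

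I expect the only genuine obstacle to be the bookkeeping at the interface with the completion: one must confirm that the ambient completion is Hausdorff and that its topology agrees with the weak* topology on the embedded copy of $\mathcal{M}(Q)$, so that the compactness of $B_R$ transfers and forces the limit to remain inside. Since $C(Q)$ is separable, the weak* topology is metrizable on the bounded set $B_R$, which allows one to replace nets by sequences and removes any concern about the $t \to \infty$ limit being a net rather than a sequence; everything else is the standard ``compact implies closed'' argument.
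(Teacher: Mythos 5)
Your argument is correct and takes essentially the same route as the paper: both rest on the uniform total-variation bound from Theorem \ref{BS} together with the identification $\mathcal{M}(Q)\cong C(Q)^*$, so that the forward trajectory is trapped in a bounded weak* compact set from which the limit cannot escape. The only cosmetic difference is that the paper phrases the closing step via the weak* quasi-completeness of the dual (citing \cite{GARRETT}) and the existence of a compact attractor, whereas you invoke Banach--Alaoglu and the fact that a compact set is closed in a Hausdorff ambient space; these are interchangeable here.
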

\begin{proof}
$\mathcal{M}(Q)$ is isomorphic to the dual of $C(Q)$ as Banach spaces and hence they have the same weak* topology. However, by \cite{GARRETT} the dual of $C(Q)$ is weak* quasi-complete. Hence, the measures are quasi-complete.
By the remark following Theorem \ref{persistence}  we see that our model has a compact attractor. Hence it is asymptotically closed with respect to $F$.
\end{proof}}
 {
Note that $C(Q)$ (where we integrate continuous functions with respect to some probability measure) and $L^1(Q,\nu)$  (where $\nu$ is some finite measure) do not have this property. The measures are universal in some categorical sense {and evolutionary game theory captures this nicely}. }

%%%%%%%%%%%%%%%%%%%%%%%%%%%%%%%%%%%%%%%%%%%%%%%%%%%%%%%%%%%%%%%%%%%%%%%%%%%%%%%%%%%%%%%%%%%%%%%%%%%%%%%%%%
%%%%%%%%%%%%%%%%%%%%%%%%%%%%%%%%%%%%%%%%%%%%%%%%%%%%%%%%%%%%%%%%%%%%%%%%%%%%%%%%%%%%%%%%%%%%%%%%%%%%%
%5555555555555555555555555555555555555555555555555555555555555555555555555555555555555555555555555555
%555555555555555555555555555555555555555555555555555555555555555555555555555555555555555555555555555555
\section{Asymptotic Analysis with Nonunique Fittest}

%%%%%%%%%%%%%%%%%%%%%%%%%%%%%%%%%%%%%%%%%%%%%%%%%%%%%%%%%%%%%%%%%%%%%%%%%%%%%%%%%%%%%%%%%%%%%%%%%%%%%%%%%%%%%%%%%%%%%%%%%%%%%%%
\subsection{Definition of ESS and CSS}
 When attempting to analyze the asymptotic behavior of an EGT model, one
usually first forms the appropriate notion of an \emph{Evolutionary
Stable Strategy} or \emph{ESS.} The concept of an \emph{ESS} was
introduced into biology from the field of game theory by Maynard
Smith and Price to study the behavior of animal conflicts
\cite{Maynard2}. Intuitively an\emph{ ESS} is a strategy such that
if all members of a population adopt it, no differing behavior could
invade the population under the force of natural selection. So at
the ``equilibrium" of an \emph{ESS } all other strategies, if
present in small quantities, should have negative fitness and die
out.

 The standard way to locate an \emph{ESS} is to maximize some fitness
measure \cite[chap. 7]{BrownVincent}. However this method has
glaring defects even though it is quite a natural way to find ESS's.
 Suppose our game is as in \cite{AMFH}. There $Q\subseteq
\text{int}(\mathbb{R}^2)$ and each $q=(q_1,q_2)$, where $q_1$ is an
intrinsic birthrate and $q_2$ is an intrinsic mortality rate. Also
$Q$ is a rectangle as in Figure 1 (left). So if $\mathcal{R}(0,q) =
\frac{f_1(0)q_1}{f_2(0)q_2}$ is the net reproductive number and the
strategy with the maximal net reproductive number is the fittest,
then clearly the (bottom-right) corner point of the rectangle is the fittest. It has the maximal
birth to death ratio. However, if there is a different $Q$ as in
Figure 1 (right) we see that the fittest is actually now a continuum
of points. Clearly with this new strategy space, maximizing the net
reproductive number will not yield a single strategy but a continuum
and hence no ESS. {
So we need a way to mathematically say that an entire class is the fittest. The idea is to form partitions of $Q$ and
to place the quotient topology on the partitions. Then form the measures on the elements of the partition and consider a weak EGT model on the measures of the partition elements. For example, given the set $Q$, there are the two trivial partitions. The one where every element is related only to itself,  $[Q]_{\bot}$,  and the one where every element relates every point to all other points, $[Q]_{\top}$. More specifically $ [Q]_{\bot}={ \{ [q]_{\bot}~|~ q ~ \in Q \}} $ where $[q]_{\bot}={q}$. Similarly for $[Q]_{\top}$, except $[q]_{\top}= Q$ for every $q \in Q$. There are also nontrivial partitions that are important from both a mathematical and evolutionary game theoretic viewpoints. For the remainder of this section we will focus on the following partition as it is both illustrative of the technique we are developing and informative biologically. Let $\mathcal{R}(0,q) =
\frac{f_1(0)q_1}{f_2(0)q_2}$. Clearly this is a continuous function on $Q$ and hence we can form the partition $ [Q]_{\mathcal{R}} $ (see Figure 2 for an example of such a partition) as an aid in defining an ESS and later a CSS.
Here $ [Q]_{\mathcal{R}} =\{ [q]_{\mathcal{R}}~|~ q~ \in Q \} $ and $[q]_{\mathcal{R}}= \mathcal{R}^{-1}[\mathcal{R}(0,q)]$.}

 {
Next consider the commutative diagram
\begin{equation}\label{diagram} \xymatrix{
Q=[Q]_{\bot} \ar[d]_{\phi_{\bot,\mathcal{R}}} \ar[rd]^{\mathcal{R}(0,\cdot)} & \\
[Q]_{\mathcal{R}(0,\cdot)} \ar[r]_{\overline{\mathcal{R}}} &
\mathbb{R} }\end{equation} where  $\phi_{\bot,\mathcal{R}}$ is the quotient map and $ \overline{\mathcal{R}}$ is the
induced map $ \overline{\mathcal{R}}([q]_{\mathcal{R}}) =
\mathcal{R}(0,q) $.}
 {
Clearly $\mathcal{R}(0, \cdot)$ is a continuous surjection onto its
image. Since $Q$ is compact and $\mathbb{R}$ is Hausdorff,
$\mathcal{R}(0, \cdot)$ is a quotient map (closed map). Hence
Theorem \ref{AdjointTheorem} implies that $\overline{\mathcal{R}}$
is a homeomorphism onto the image of $\mathcal{R}(0, \cdot)$. Hence
we can define a metric
\begin{equation} \rho([q]_{\mathcal{R}}, [\hat q]_{\mathcal{R}})= |\mathcal{R}(0,q)-
\mathcal{R}(0,\hat q)|.\end{equation} This makes
$([Q]_{\mathcal{R}(0, \cdot)},\rho )$ into a compact metric space,
since it is homeomorphic to the image of $\mathcal{R}(0, \cdot)$.}

 {
Since $([Q]_{\mathcal{R}(0, \cdot)},\rho )$ is a compact metric space then we can form $\mathcal{M}_w([Q]_{\mathcal{R}(0, \cdot)})$. These are the finite signed Borel measures on $([Q]_{\mathcal{R}(0, \cdot)},\rho )$ in the weak* topology. The map $\phi_{\bot,\mathcal{R}}^*[\mu(t)](E) = \mu(t)( \phi_{\bot,\mathcal{R}}^{-1}(E))$ will take a dynamical system on $\mathcal{M}_w(Q)$ to a dynamical system on $\mathcal{M}([Q]_{\mathcal{R}(0, \cdot)})_w$.
  If
$\mu(t)$ is evolving according to pure replicator dynamics on $Q$,
we can define $\mu_{\mathcal{R}}^*(t):= \phi_{\bot,\mathcal{R}}^*(\mu)(t)$, $ [q]_{\mathcal{R}} := [q]_{\mathcal{R}(0,
\cdot)}$ and $([Q]_{\mathcal{R}},\rho ):= ([Q]_{\mathcal{R}(0,
\cdot)},\rho )$. We can then view $\mu_{\mathcal{R}}^*(t)$ as a weak version of $\mu(t)$.
Since $\phi_{\bot,\mathcal{R}}$ is continuous, then using Theorem \ref{changeofvar} in the Appendix we can see that if $ \mu(t) \to \mu_{0} $
 in $\mathcal{M}_w(Q)$, then $ \phi_{\bot,\mathcal{R}}^*[\mu(t)]  \to
\phi_{\bot,\mathcal{R}}^*[\mu_{0}]$ on $\mathcal{M}_w([Q]_{\mathcal{R}}) $ i.e. their weak versions will converge as well. This is important if we wish to show that we really have a ``generalized" dynamical system. When we extend our notion of dynamical system we still keep our old notions of convergence.

 With this background we are now ready to continue our study of EGT dynamics.}

\begin{center}
{\bf Insert Figure 2 Here}
\end{center}

We use the above discussion to define ESS and CSS as follows: We define for two strategy classes $[q]_{\mathcal{R}}$ and $[\hat
q]_{\mathcal{R}} $ a relative fitness. Then using this definition we
define an \emph{ESS}. To this end define the relative fitness
between two strategy classes as
$$\lambda_R([q]_{\mathcal{R}}, [\hat q]_{\mathcal{R}}) =\mathcal{R}(K(q), \hat q) -1.$$
This is clearly well-defined and is a measure of the long term
fitness of $[\hat q]_{\mathcal{R}}$ when $[q]_{\mathcal{R}}$ is in
equilibrium.
\begin{definition} A strategy class $[q]_{\mathcal{R}}$ is a  \textbf{(local) global \emph{ESS}} if $ [q]_{\mathcal{R}}$
satisfies $$\lambda_{R}([q]_{\mathcal{R}},[\hat q]_{\mathcal{R}}) <
\lambda_{R}([q]_{\mathcal{R}},[q]_{\mathcal{R}}), \text{ for all }
[\hat q]_{\mathcal{R}} ~(\text{in a neighborhood of }
[q]_{\mathcal{R}}).  $$ In this work an ESS is global unless explicitly mentioned as local.
\end{definition}

Notice that $\lambda_{R}([q]_{\mathcal{R}},[q]_{\mathcal{R}}) =0$
for all $ [q]_{\mathcal{R}} $ and hence $[q]_{\mathcal{R}}$ is an
(local) \emph{ESS} if and only if $\lambda_{R}([q]_{\mathcal{R}}, [\hat
q]_{\mathcal{R}}) <0$ for all $[\hat q]_{\mathcal{R}} \neq
[q]_{\mathcal{R}}$ (in a neighborhood of $[q]_{\mathcal{R}})$. All
other classes (in a neighborhood) have negative fitness when $[q]_{\mathcal{R}}$ is at
equilibrium and hence die out.

The concept of an ESS is insufficient to determine the outcome of
the evolutionary game, since a strategy that is an \emph{ESS} need
not be an evolutionary attractor \cite[chpt. 6]{BrownVincent}. It is
not the case that all members of the population will end up playing
that strategy. An \emph{ESS} simply implies that if a population
adopts a certain strategy (phenotype, language or cultural norm
etc.), then no mutant small in quantity can invade or replace this
strategy class.

\begin{definition} Suppose a population is evolving according to
\eqref{M}. If
$c_{[q]_{\mathcal{R}}}\delta_{[q]_{\mathcal{R}}}$ is a (local)
global attractor of $\mu^*_{\mathcal{R}}$ for some finite number
$c_{[q]_{\mathcal{R}}}$, then we call this strategy a
\textbf{(local) global Continuously Stable Strategy} or
\textbf{\emph{CSS}}. A CSS is assumed to be global unless explicitly stated to be local.
\end{definition}
This strategy, if it exists, is the endgame of the evolutionary
process. As it is attractive and once adopted, it cannot be invaded
or replaced.

%%%%%%%%%%%%%%%%%%%%%%%%%%%%%%%%%%%%%%%%%%%%%%%%%%%%%%%%%%%%%%%%%%%%%%%%%%%%%%%%%%%%%%%%%%%%%%%%%%%%%%%%%%%%%%%%%%%%%%%%

%%%%%%%%%%%%%%%%%%%%%%%%%%%%%%%%%**********************************************************************

\subsection{Pure Replicator Dynamics and Small Mutation Discrete}

Let $\gamma(\hat q) =\delta_{\hat q}$ and $u\in
\mathcal{M}_+.$ Substituting these parameters in \eqref{M} one
obtains the pure selection model
\begin {equation} \left\{\begin{array}{ll}\label{selection}
 \displaystyle \frac{d}{dt}{\mu}(t;u, \gamma)(E) = \int_E \left ({f}_1(\mu(t)(Q), \hat q)-{f}_{2}(\mu(t)(Q),\hat q)
\right )d\mu(t)(\hat q) \\
\mu(0;u,\gamma)=u.
\end{array}\right.\end{equation}

In this subsection we show that $[\mathfrak{Q} ]_\mathcal{R} $ is a
$CSS$. This means that if a population is evolving according to pure
replicator dynamics \eqref{selection}, then a multiple of $\delta_{[\mathfrak{Q}
]_{\mathcal{R}}}$ is a global attractor of $\mu_{\mathcal{R}}^*(t).$

\begin{theorem}\label{CSS1}  Assume (A1)-(A5) hold then $[\mathfrak{Q} ]_\mathcal{R}$ is a $CSS$. In particular,
if the population $ \mu(t) $ is evolving according to the law of
pure replicator dynamics \eqref{selection} and $ [\mathfrak{Q}
]_\mathcal{R} \cap supp (\mu(0)) \ne \emptyset$, then
$$\mu_{\mathcal{R}}^*(t) \to K_{\mathfrak{Q}} \delta_{[\mathfrak{Q}
]_\mathcal{R}}, \quad t \to \infty
$$ in the weak$^*$ topology.
\end{theorem}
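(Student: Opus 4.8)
The plan is to transport everything to the quotient space $([Q]_{\mathcal{R}},\rho)$, in which the entire fittest class collapses to the single point $[\mathfrak{Q}]_{\mathcal{R}}$ carrying the value $r_{\max}:=\mathcal{R}(0,\mathfrak{Q})=\max_Q\mathcal{R}(0,\cdot)$. Writing $\nu(t):=\mu^*_{\mathcal{R}}(t)$ and $X(t):=\mu(t)(Q)=\nu(t)([Q]_{\mathcal{R}})$, the statement to prove is the weak$^*$ convergence $\nu(t)\to K_{\mathfrak{Q}}\delta_{[\mathfrak{Q}]_{\mathcal{R}}}$. Since $[Q]_{\mathcal{R}}$ is compact and the limit is a point mass, I would reduce this to two scalar facts about the solution on $Q$: (i) the total mass converges, $X(t)\to K_{\mathfrak{Q}}$; and (ii) mass escapes every fitness sub-level set, $\mu(t)(A_\eps)\to 0$ for each $\eps>0$, where $A_\eps:=\{q\in Q:\mathcal{R}(0,q)\le r_{\max}-\eps\}$. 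Both quantities factor through $\phi_{\bot,\mathcal{R}}$, so they may be computed from $\mu(t)$ directly. Granted (i)--(ii), for any $g=\psi\circ\overline{\mathcal{R}}\in C([Q]_{\mathcal{R}})$ I would split $\int g\,d\nu(t)$ over $A_\eps$ and its complement, bound the first piece by $\|\psi\|_\infty\,\mu(t)(A_\eps)$, and control the second using continuity of $\psi$ at $r_{\max}$ together with $X(t)\to K_{\mathfrak{Q}}$, obtaining $\int g\,d\nu(t)\to K_{\mathfrak{Q}}\,g([\mathfrak{Q}]_{\mathcal{R}})$. Theorem \ref{changeofvar} then certifies that this is the genuine generalized limit of the original flow.

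For the preliminary bounds I would invoke Theorem \ref{BS}: the orbit is bounded and $\limsup_{t}X(t)\le K_{\mathfrak{Q}}$. The support hypothesis $[\mathfrak{Q}]_{\mathcal{R}}\cap\operatorname{supp}(\mu(0))\ne\emptyset$ pushes forward to $r_{\max}\in\operatorname{supp}(\nu(0))$, i.e. $\mu(0)(E_0^\delta)>0$ for every $\delta>0$, where $E_0^\delta:=\{q:\mathcal{R}(0,q)>r_{\max}-\delta\}$; since \eqref{selection} carries no mutation, $\frac{d}{dt}\mu(t)(E_0^\delta)\ge -M\mu(t)(E_0^\delta)$ (with $M=\sup f_2$ on the bounded range of $X$), so this mass stays strictly positive. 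Step (ii) is then essentially mechanical once $X$ is known to sit near $K_{\mathfrak{Q}}$: using (A4) (strict preservation of the fitness order) and (A2) ($f_2\ge\varpi>0$), for $q\in A_\eps$ one has $\mathcal{R}(X,q)\le\mathcal{R}(X,\tilde q)$ for a representative $\tilde q$ with $\mathcal{R}(0,\tilde q)=r_{\max}-\eps$, while $\mathcal{R}(K_{\mathfrak{Q}},\tilde q)<\mathcal{R}(K_{\mathfrak{Q}},\mathfrak{Q})=1$ by (A5). Hence once $X(t)\ge K_{\mathfrak{Q}}-\delta'$ (with $\delta'$ small) the per-capita rate obeys $f_1(X,q)-f_2(X,q)=f_2(X,q)(\mathcal{R}(X,q)-1)\le-\varpi c_\eps<0$ on $A_\eps$, so $\frac{d}{dt}\mu(t)(A_\eps)\le-\varpi c_\eps\,\mu(t)(A_\eps)$ and the mass there decays exponentially. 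Thus (ii) follows once $\liminf_t X(t)\ge K_{\mathfrak{Q}}$ is in hand.

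The heart of the matter, and the step I expect to be the main obstacle, is upgrading the crude lower bound $\liminf X(t)\ge k_{\mathfrak{q}}$ of Theorem \ref{BS} to $\liminf X(t)\ge K_{\mathfrak{Q}}$. The driving mechanism is the growth of the near-fittest mass $P_\delta(t):=\mu(t)(E_0^\delta)$: writing $\kappa_\delta:=K(\tilde q)$ for $\mathcal{R}(0,\tilde q)=r_{\max}-\delta$ (so that $\kappa_\delta\uparrow K_{\mathfrak{Q}}$ as $\delta\downarrow0$ by continuity of $\mathcal{R}$ in $q$), one checks that $\dot P_\delta\ge\varpi(\mathcal{R}(X,\tilde q)-1)P_\delta\ge 0$ whenever $X\le\kappa_\delta$. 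Were $\limsup X<\kappa_\delta$, this forces $P_\delta\to\infty$, contradicting $P_\delta\le X\le K_{\mathfrak{Q}}$; letting $\delta\downarrow0$ then yields $\limsup X=K_{\mathfrak{Q}}$. The genuine difficulty is that this does not by itself preclude the total mass from dipping below $K_{\mathfrak{Q}}$ infinitely often, because $\dot X$ depends on the full distribution of $\mu(t)$ rather than on $X(t)$ alone, so a naive fluctuation argument stalls. To close the gap I would exploit the compact weak$^*$ attractor furnished by the remark following Theorem \ref{persistence}: pass to the (nonempty, compact, invariant) $\omega$-limit set and argue on an entire bounded orbit within it, where the monotone growth of $P_\delta$ below $\kappa_\delta$, the preserved positivity of the fittest-region mass, and a LaSalle-type invariance argument with the mass-deficit functional $\int_Q(r_{\max}-\mathcal{R}(0,q))\,d\mu$ force every $\omega$-limit measure to have total mass $K_{\mathfrak{Q}}$ and to be supported on $[\mathfrak{Q}]_{\mathcal{R}}$; this gives $X(t)\to K_{\mathfrak{Q}}$.

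Combining $X(t)\to K_{\mathfrak{Q}}$ with the concentration estimate (ii) and the test-function splitting of the first paragraph completes the argument that $\mu^*_{\mathcal{R}}(t)\to K_{\mathfrak{Q}}\delta_{[\mathfrak{Q}]_{\mathcal{R}}}$ weak$^*$, so $K_{\mathfrak{Q}}\delta_{[\mathfrak{Q}]_{\mathcal{R}}}$ is a global attractor of $\mu^*_{\mathcal{R}}$ and $[\mathfrak{Q}]_{\mathcal{R}}$ is a (global) \emph{CSS}. I would emphasize that the decisive structural inputs are (A4), which guarantees that the whole fittest class behaves as the \emph{unique} fittest point after passing to the quotient, and (A2), whose uniform lower bound $f_2\ge\varpi$ makes the concentration decay rate genuinely negative and uniform; the total-population lower bound is where the real work lies.
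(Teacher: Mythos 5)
Your overall architecture (total mass $\to K_{\mathfrak{Q}}$, plus escape of mass from every fitness sub-level set, plus the test-function splitting on the quotient) is the same as the paper's, and your $\limsup$ argument via exponential growth of the near-fittest mass matches the paper's Lemma 4.7 almost verbatim. But there is a genuine gap at exactly the point you flag as "the heart of the matter," and the order in which you arrange the two main steps is what creates it. You make the concentration estimate (ii) contingent on first knowing $\liminf_t X(t)\ge K_{\mathfrak{Q}}$, and then propose to obtain that lower bound from an $\omega$-limit-set/LaSalle argument with the functional $\int_Q(r_{\max}-\mathcal{R}(0,q))\,d\mu$. Two things break here. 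First, that functional is not monotone along orbits: its derivative is $\int_Q(r_{\max}-\mathcal{R}(0,q))\,f_2(X,q)(\mathcal{R}(X,q)-1)\,d\mu$, which has no definite sign, and more fundamentally the per-capita rate $f_2(X,q)(\mathcal{R}(X,q)-1)$ does \emph{not} respect the $\mathcal{R}$-ordering guaranteed by (A4) when $f_2(X,\cdot)$ varies over $Q$ (a strategy with larger $\mathcal{R}$ but much larger $f_2$ can decay faster than a less fit one when both brackets are negative), so no naive "mean fitness increases" argument is available. Second, the "preserved positivity of the fittest-region mass" is not a closed condition and does not pass to $\omega$-limit points; $\mu(t)(E_0^\delta)>0$ for all finite $t$ is compatible with that mass tending to zero, which is precisely the scenario you must exclude.

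The paper avoids this circularity by reversing the order: its Lemma 4.6 proves concentration \emph{first}, using only the a priori bound $\limsup_t X(t)\le K_{\mathfrak{Q}}$ from Theorem 3.1, via the ratio device $z=y^{\xi}x^{-\zeta}$ with $y(t)=\mu(t)(B_\delta(\breve q))$, $x(t)=\mu(t)(B_\delta(\mathfrak{Q}))$, $\xi=1/\sup f_2$ and $\zeta=1/\inf f_2$. The exponents are chosen exactly to neutralize the heterogeneity of $f_2$ so that $z'/z\le -\eta A/6<0$, and then $y^{\xi}=z\,x^{\zeta}\to 0$ because $x$ is bounded \emph{above} --- no lower bound on the fittest-region mass and no knowledge of $\lim X$ is needed. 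Only after all mass is known to concentrate on an arbitrarily small neighborhood $U$ of $[\mathfrak{Q}]_{\mathcal{R}}$ does the paper run a fluctuation argument on $\mu(t)(U)$ (not on $X(t)$) to force $\liminf_t\mu(t)(U)\ge K_{\mathfrak{Q}}$. To repair your proof you need some substitute for this relative-growth comparison; as written, the step from $\limsup X=K_{\mathfrak{Q}}$ to $\liminf X\ge K_{\mathfrak{Q}}$ is not established.
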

 We require two lemmas and a proposition.

\begin{lemma}\label{extinction} Assume (A1)-(A5).
Let $ U_0 $ be an open subset of $Q$ which contains $[\mathfrak{Q}
]_\mathcal{R}$. Let $\mu(t) $ be a solution evolving according to
\eqref{selection} and such that $ [\mathfrak{Q} ]_\mathcal{R} \cap
supp(\mu(0)\neq \emptyset$. Then $\mu(t) (Q \setminus U_0) \to
0 \text{ as } t \to \infty .$
\end{lemma}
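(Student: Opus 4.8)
The plan is to exploit the fact that under pure replicator dynamics \eqref{selection} no mass is transported between strategies: the solution is obtained by growing the initial mass pointwise. Writing $g(X,q)=f_1(X,q)-f_2(X,q)=f_2(X,q)[\mathcal{R}(X,q)-1]$ for the per-capita growth rate and $X(t)=\mu(t)(Q)$, one has $\frac{d}{dt}\mu(t)(E)=\int_E g(X(t),q)\,d\mu(t)(q)$, and in particular $\mu(t)(E)=\int_E \exp\!\big(\int_0^t g(X(s),q)\,ds\big)\,d\mu(0)(q)$. Since $f_2>0$, the sign of the per-capita rate $g(X,q)$ is the sign of $\mathcal{R}(X,q)-1$. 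The goal $\mu(t)(Q\setminus U_0)\to0$ will follow from a Gronwall estimate once I show that every strategy in $Q\setminus U_0$ is eventually subcritical, i.e. $\mathcal{R}(X(t),q)<1$ uniformly for large $t$.

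First I would record the fitness gap. Because $U_0$ is open and contains the fittest class $[\mathfrak{Q}]_{\mathcal{R}}=\mathcal{R}^{-1}[\mathcal{R}(0,\mathfrak{Q})]$, the set $Q\setminus U_0$ is compact and every $q\in Q\setminus U_0$ has $\mathcal{R}(0,q)<\mathcal{R}(0,\mathfrak{Q})$; continuity of $\mathcal{R}(0,\cdot)$ and compactness give $\sup_{Q\setminus U_0}\mathcal{R}(0,\cdot)<\mathcal{R}(0,\mathfrak{Q})$. Assumption (A4) propagates this ordering to every population level, so the carrying capacities satisfy $K(q)\le K^-<K_{\mathfrak{Q}}$ for all $q\in Q\setminus U_0$, with $K^-:=\sup_{Q\setminus U_0}K(\cdot)$. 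Consequently, for any $\eta>0$ with $K^-+\eta<K_{\mathfrak{Q}}$, monotonicity of $\mathcal{R}(\cdot,q)$ together with compactness yields $r:=\sup_{q\in Q\setminus U_0}\mathcal{R}(K^-+\eta,q)<1$.

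The crux is to produce a threshold time $T$ after which $X(t)\ge K^-+\eta$. Here the support hypothesis enters: pick $q^*\in[\mathfrak{Q}]_{\mathcal{R}}\cap\mathrm{supp}(\mu(0))$. For a small radius $\delta$ the ball mass $m(t):=\mu(t)(B_\delta(q^*))$ is strictly positive for all $t$ (the exponential factor above is positive and $m(0)>0$) and is bounded above by $X(t)\le\max\{\mu(0)(Q),K_{\mathfrak{Q}}\}$ from Theorem \ref{BS}. Whenever $X(t)\le\xi<K_{\mathfrak{Q}}$, we have $\mathcal{R}(\xi,q^*)=\mathcal{R}(\xi,\mathfrak{Q})>1$, so by continuity I may shrink $\delta$ to force $\inf_{q\in B_\delta(q^*)}\mathcal{R}(\xi,q)>1$, whence $m$ grows at a uniform positive exponential rate on any time interval where $X\le\xi$. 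Since $m$ is bounded, $X$ cannot remain below $\xi$; pushing $\xi\uparrow K_{\mathfrak{Q}}$ and chaining the excursions exactly as in the $\liminf$/$\limsup$ argument in the proof of Theorem \ref{BS} upgrades this to $\liminf_{t\to\infty}X(t)\ge K_{\mathfrak{Q}}$, which furnishes the desired $T$.

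Finally, for $t\ge T$ every $q\in Q\setminus U_0$ has $\mathcal{R}(X(t),q)\le r<1$, and using $f_2\ge\varpi$ from (A2) the integrand in $\frac{d}{dt}\mu(t)(Q\setminus U_0)=\int_{Q\setminus U_0}f_2(X(t),q)[\mathcal{R}(X(t),q)-1]\,d\mu(t)(q)$ is bounded above by the negative constant $\varpi(r-1)$ times the density, giving $\frac{d}{dt}\mu(t)(Q\setminus U_0)\le\varpi(r-1)\,\mu(t)(Q\setminus U_0)$; Gronwall then forces exponential decay to $0$. I expect the genuine obstacle to be the third step: Theorem \ref{BS} only supplies $\liminf X\ge k_{\mathfrak{q}}$, which may be far below $K_{\mathfrak{Q}}$, so the eventual lower bound $X(t)\ge K^-+\eta$ must be extracted from the support condition. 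This is delicate precisely because the fittest class may carry zero initial mass (only its closure meets $\mathrm{supp}(\mu(0))$), forcing the ball argument together with uniform continuity of $\mathcal{R}$ in $q$, and because turning a $\limsup$ statement into a genuine eventual lower bound requires carefully controlling the oscillations of $X(t)$ via a chaining argument of the kind already used for Theorem \ref{BS}.
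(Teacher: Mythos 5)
Your setup (the exponential representation of $\mu(t)$ for pure selection, the fitness gap $\sup_{Q\setminus U_0}\mathcal{R}(0,\cdot)<\mathcal{R}(0,\mathfrak{Q})$ via compactness and (A4), and the final Gronwall step) is sound, but the argument has a genuine gap exactly where you suspect it: the eventual lower bound $X(t)\ge K^-+\eta$ for all large $t$. Your ball argument around $q^*\in[\mathfrak{Q}]_{\mathcal{R}}\cap\mathrm{supp}(\mu(0))$ only shows that $X$ cannot stay below $\xi$ forever, i.e.\ $\limsup_{t\to\infty}X(t)\ge K_{\mathfrak{Q}}$; it does not control $\liminf X$. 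The chaining in Theorem \ref{BS} cannot be reused here because it rests on the global differential inequality $\dot X\ge[\mathcal{R}(X,\mathfrak{q})-1]\int f_2\,d\mu$, which forces $\dot X\ge 0$ only below the threshold $k_{\mathfrak{q}}$; for any threshold above $k_{\mathfrak{q}}$ the total population can decrease even while the mass near $[\mathfrak{Q}]_{\mathcal{R}}$ grows, because the bulk of $\mu(t)$ may sit on strategies whose carrying capacities lie below $K^-$. Worse, the natural way to convert $\limsup X=K_{\mathfrak{Q}}$ into $\liminf X\ge K_{\mathfrak{Q}}$ (the fluctuation argument in Lemma \ref{populationlimit}) itself invokes Lemma \ref{extinction} to discard the contribution of $Q\setminus U$ to $\dot X$; so your plan is circular: it proves the extinction lemma from the population-limit lemma, while the paper proves the population-limit lemma from the extinction lemma. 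During every excursion of $X(t)$ below $K^-$, your pointwise-subcriticality claim $\mathcal{R}(X(t),q)<1$ on $Q\setminus U_0$ fails and the outside mass can regrow, so the Gronwall estimate never gets off the ground.

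The paper's proof avoids needing any lower bound on $X$ at all. For $\breve q\notin[\mathfrak{Q}]_{\mathcal{R}}$ it sets $x(t)=\mu(t)(B_\delta(\mathfrak{Q}))$, $y(t)=\mu(t)(B_\delta(\breve q))$ and studies the relative quantity $z=y^{\xi}x^{-\zeta}$ with $\xi=1/\sup f_2$, $\zeta=1/\inf f_2$. In $\frac{z'}{z}=\xi\frac{y'}{y}-\zeta\frac{x'}{x}$ the problematic factor $\mathcal{R}(X,\mathfrak{Q})-1$, whose sign is unknown below $K_{\mathfrak{Q}}$, appears multiplied by $\xi f_2(X,\overline{\breve q})-\zeta f_2(X,\underline{\mathfrak{Q}})\le 0$, so that term is nonpositive whenever $X\le K_{\mathfrak{Q}}$ and is $O(\epsilon_1)$ when $X$ transiently exceeds $K_{\mathfrak{Q}}$ (using only the $\limsup$ bound from Theorem \ref{BS}); the strictly negative fitness-gap term $-\eta\xi f_2$ then dominates, $z\to 0$ exponentially, and boundedness of $x$ forces $y\to 0$. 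A compactness covering of $Q\setminus U_0$ finishes. If you want to salvage your route, you would need to replace your step 3 by some such relative (ratio) comparison between the mass near $\breve q$ and the mass near $\mathfrak{Q}$; an absolute subcriticality argument on $Q\setminus U_0$ alone cannot work.
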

\begin{proof}
 We divide this proof into two parts: I and II.

Part I: We show that if $\breve{q} \not \in [\mathfrak{Q}
]_\mathcal{R},$ then there exists $\delta = \delta(\breve{q})>0 $
such that $\mu(t) ( B_{\delta}( \breve{q})) \to 0$ as $t \to
\infty.$

\begin{enumerate}
\item Let $\mathfrak{Q}  \in supp~ (\mu(0))$. Since $[\mathfrak{Q} ]_\mathcal{R}$ and $\{\breve{q}\}$ are both closed
and metric spaces are normal there exists $\delta _1>0 $ such that $
B_{\delta_1}(\mathfrak{Q} ) \cap \overline{B_{\delta_1}( \breve{q})}
=\emptyset.$ Next choose $K > \max \{\mu(0)(Q),
K_{\mathfrak{Q}} \}.$
\item
Let $\xi = \frac{1}{\sup_{[0, K]\times Q} f_2(X,q)}$,
$\zeta=\frac{1}{\inf_{[0, K]\times Q} f_2(X,q)}$,
 $A= \xi \inf_{ [0, K]\times Q}f_2(X,q)$, and
 $B = \zeta \sup_{[0, K]\times Q} f_2(X,q)$.
 Lastly let $\eta >0 $ be such that $\mathcal{R}(X,q) < \mathcal{R}(X,\mathfrak{Q} ) - \eta
$, $ \forall q \in \overline{B_{\delta_1}( \breve{q})}.$ Such an
$\eta$ exists by continuity of $\mathcal{F}(X,q)=
\mathcal{R}(X,\mathfrak{Q}) - \mathcal{R}(X,q) > 0$ on $[0,
 K]\times \overline{B_{\delta_1}( \breve{q})}$.

\item  Let $\epsilon_2=\frac{\eta A}{2 B}$.
 Since $\mathcal{R}(X,q)$ is uniformly continuous on $[0,
 K]\times Q$, there exists $ \delta = \delta(\epsilon_2) >0$ such that if
$(X,q)$ and $(\hat X,\hat{q}) $ are within $\delta $ distance of
each other, then $\mathcal{R}(X,q), \mathcal{R}(\hat X,\hat{q})$ are
within $ \epsilon_2$ distance of each other. Let $\delta=\min \{\delta_1, \delta(
\epsilon_2) \}$, $f(X,q) = f_1(X,q)-f_2(X,q)$, $  \overline{\breve{q}} $ the point where $f(X,\cdot)$ attains its maximum value on $
\overline{B_\delta(\breve q)} $, and $\underline{\mathfrak{Q} }$
 the point where $f(X,\cdot)$ attains its minimum on $
\overline{B_\delta( \mathfrak{Q} )},$ and $\epsilon_1 = \frac{\eta A}{3}  \frac{1}{\sup_{[0, K]} |\xi
 f_2(X,\overline{\breve{q}}) - \zeta f_2(X,\underline{\mathfrak{Q} })|}. $

\item Set $x(t) = \mu(t)(B_\delta (\mathfrak{Q}))$ and
$y(t) = \mu(t) ( B_\delta (\breve q))$. By assumption, $x(0)
>0$ and hence $x(t)
>0$ for all $t \ge 0$. We now show that $y(t) \to 0$ as $t
\to \infty$. Clearly we can assume that $y(t) > 0$ for all $t \ge
0$, otherwise $y$ is identically zero and the result holds. Let
$z =y^\xi x^{-\zeta} .$ Then, denoting $\frac{d}{dt} y(t)$ by $y'$
and using similar notation for other functions, we obtain
\begin{equation}
\label{eq:quotient} \frac{z'}{z}= \xi \frac{y'}{y} - \zeta
\frac{x'}{x}.
\end{equation}

 Then we have
\[
\begin{array}{rl}
y' = & \int\limits_{ B_\delta(\breve q)} ( f_1(X,q) -  f_2(X,q))
d\mu(q) \le  f_2(X,\overline{\breve{q}})
(\mathcal{R}(X,\overline{\breve{q}})-1) y.
\end{array}
\]

Also
\[
x' = \int_{B_{\delta}(\mathfrak{Q} )} ( f_1(X,q) -f_2(X,q) ) d\mu
(q) \ge  f_2(X,\underline{\mathfrak{Q} })
(\mathcal{R}(X,\underline{\mathfrak{Q} })-1) x .\]

Since $\overline{\breve q} \in \overline{B_\delta(\breve q)}$ and $
\underline{\mathfrak{Q} } \in \overline{B_\delta( \mathfrak{Q} )}$,
we obtain
$$\begin{array}{lll} \frac{z'}{z}
&\le& \bigl [\xi f_2(X,\overline{\breve{q}})- \zeta
f_2(X,\underline{\mathfrak{Q} }) \bigr ] (\cR(X,\mathfrak{Q} )-1)  \\
&& \quad  - \eta \xi f_2(X,\overline{\breve{q}})+ \epsilon_2 \zeta f_2(X,\underline{\mathfrak{Q} })\\
 & =& I + II  ,
\end{array}$$
where
$$\begin{array}{l} I= \bigl [\xi f_2(X,\overline{\breve{q}})- \zeta
f_2(X,\underline{\mathfrak{Q} }) \bigr ](\cR(X,\mathfrak{Q} )-1) \\
II= - \eta \xi f_2(X,\overline{\breve{q}})+ \epsilon_2 \zeta
f_2(X,\underline{\mathfrak{Q} }) .
\end{array}$$

\item  Clearly $ II <  -\frac{\eta A} {2}$ and since $\epsilon_1 = \frac{\eta}{3}  \frac{A}{\sup_{[0, K]} |\xi
 f_2(X,\overline{\breve{q}}) - \zeta f_2(X,\underline{\mathfrak{Q} })|}$, $ I <
 \frac{\eta A}{3}. $

Indeed since $\epsilon_1 >0$ and $\mathcal{R}( K_{\mathfrak{Q}}
,\mathfrak{Q} ) =1 $, there is an $\varepsilon>0 $ such that
 $|\mathcal{R}(X,\mathfrak{Q} )- 1| < \epsilon_1 $ if
 $X \in (K_{\mathfrak{Q}}  - \varepsilon, K_{\mathfrak{Q}}  + \varepsilon)$.
  Since $ \limsup_{t\to \infty} \mu(t)(Q) \leq K_{\mathfrak{Q}} $,
  then  for $t$ large enough $ X(t)\in [0, \min\{K_{\mathfrak{Q}}  + \varepsilon, K \}]$.
  So let $t$ be this large. If $X(t) \le K_{\mathfrak{Q}} $,
  then $I \le 0$. Otherwise $ K_{\mathfrak{Q}}  \le X(t) \le K_{\mathfrak{Q}}
   + \varepsilon$ and $ I =|I| \le \bigl |\xi
f_2(X,\overline{\breve{q}})- \zeta f_2(X,\underline{q_{*}}) \bigr
|\epsilon_1  <\frac{\eta A}{3}$. Thus,
 $$
\frac{z'}{z}\le I+II <  -\frac{\eta A}{2} + \frac{\eta A}{3} \le
-\frac{\eta A}{6} $$ and $z(t) \to 0$ as $t \to \infty$. Since $x$
is bounded, $y(t) \to 0 $ as $t \to \infty$.
\end{enumerate}

Part II :  $Q_0 = Q \setminus U_0$ is a compact set and $Q_0
\subseteq Q .$ Since $Q_0$ is compact, there is a finite subset
$\breve Q$ of $Q_0$ such that $ Q_0$ is contained in the union of
$B_\delta (q)$, $ q \in \breve Q$. Let $y_q (t) = \mu(t) ( B_\delta
(q))$. By the claim above, $y_q(t) \to 0 $ as $t \to \infty$ for
each $q \in \breve Q$. Since $\breve Q$ is finite and $\mu(t)
(\cdot)$ is a measure,
\[
\mu (t) (Q_0) \le \sum_{q \in \breve Q} y_q(t) \to 0 , \qquad t \to
\infty.
\]

\end{proof}

\begin{lemma}
\label{populationlimit} Assume (A1)-(A5). If $\mu(t)$ is a solution
evolving according to \eqref{selection} and such that $[\mathfrak{Q}
]_\mathcal{R} \cap supp( \mu(0)) \neq \emptyset$, then
$$ \lim_{t \to \infty} \mu(t) (Q) =K_{\mathfrak{Q}} .$$
\end{lemma}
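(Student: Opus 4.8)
The plan is to set $X(t)=\mu(t)(Q)$ and establish the two one-sided bounds $\limsup_{t\to\infty}X(t)\le K_{\mathfrak{Q}}$ and $\liminf_{t\to\infty}X(t)\ge K_{\mathfrak{Q}}$ separately; together with $\mu(t)\ge 0$ these force $\lim X(t)=K_{\mathfrak{Q}}$. The upper bound is already in hand: it is exactly inequality \eqref{limsupbound} of Theorem \ref{BS}. (If $K_{\mathfrak{Q}}=0$, equivalently $\mathcal{R}(0,\mathfrak{Q})\le 1$, this bound plus nonnegativity finishes the proof at once, so I assume $K_{\mathfrak{Q}}>0$ henceforth.) All the work is in the lower bound, and its engine is the concentration statement of Lemma \ref{extinction}: the mass off any open neighborhood of $[\mathfrak{Q}]_\mathcal{R}$ tends to $0$.

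Fix any $\xi\in(0,K_{\mathfrak{Q}})$; I will show $\liminf_t X(t)\ge\xi$, which yields $\liminf_t X(t)\ge K_{\mathfrak{Q}}$ since $\xi$ is arbitrary. From the definition of $K_{\mathfrak{Q}}$ together with (A5), and the fact that $\mathcal{R}=f_1/f_2$ is nonincreasing in $X$ by (A1)--(A2), one has $\mathcal{R}(X,\mathfrak{Q})\ge 1+3\eta$ for all $X\le\xi$, where $3\eta:=\mathcal{R}(\xi,\mathfrak{Q})-1>0$. By (A4) every $q\in[\mathfrak{Q}]_\mathcal{R}$ satisfies $\mathcal{R}(\cdot,q)\equiv\mathcal{R}(\cdot,\mathfrak{Q})$, so uniform continuity of $\mathcal{R}$ on the compact set $[0,K_0]\times Q$, with $K_0:=\max\{\mu(0)(Q),K_{\mathfrak{Q}}\}$ bounding the trajectory by \eqref{limbound}, lets me pick an open neighborhood $U_0\supseteq[\mathfrak{Q}]_\mathcal{R}$ so small that $\mathcal{R}(X,q)-1\ge 2\eta$ for all $q\in U_0$ and $X\le\xi$. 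Writing $x(t):=\mu(t)(U_0)$ and using that the pure-selection flow \eqref{selection} acts locally on Borel sets, with $f_2\ge\varpi>0$ from (A2), I obtain the key differential inequality: whenever $X(t)\le\xi$, $\dot x(t)=\int_{U_0}(\mathcal{R}(X,q)-1)f_2(X,q)\,d\mu(t)(q)\ge 2\eta\varpi\,x(t)$. Here $x(0)=\mu(0)(U_0)>0$, since $U_0$ is an open neighborhood of a point of $\operatorname{supp}\mu(0)$, and the local dynamics give $x(t)>0$ for all $t$.

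The heart of the argument is a ``no-dip'' (ratchet) claim that converts this into a lower bound on $X$. Fix $\epsilon_0\in(0,\xi)$ and, by Lemma \ref{extinction}, a time $T_0$ with $X(t)-x(t)=\mu(t)(Q\setminus U_0)<\epsilon_0$ for $t\ge T_0$; set $m:=\xi-\epsilon_0>0$. I claim that once $x(t_1)>m$ at some $t_1\ge T_0$, then $x(t)\ge m$ for all $t\ge t_1$: at a putative first return time $t^\ast>t_1$ with $x(t^\ast)=m$ one would have $\dot x(t^\ast)\le 0$, yet $X(t^\ast)<x(t^\ast)+\epsilon_0=\xi$ makes the differential inequality apply and forces $\dot x(t^\ast)\ge 2\eta\varpi m>0$, a contradiction. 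To launch the ratchet, note that if instead $X(t)\le\xi$ for all $t\ge T_0$, then $\dot x\ge 2\eta\varpi\,x$ throughout, so $x(t)\ge x(T_0)e^{2\eta\varpi(t-T_0)}\to\infty$, contradicting $x(t)\le X(t)\le\xi$; hence some $t_1\ge T_0$ has $X(t_1)>\xi$, whence $x(t_1)>X(t_1)-\epsilon_0>m$. The ratchet then gives $X(t)\ge x(t)\ge\xi-\epsilon_0$ for all $t\ge t_1$, so $\liminf_t X(t)\ge\xi-\epsilon_0$. Letting $\epsilon_0\downarrow 0$ and then $\xi\uparrow K_{\mathfrak{Q}}$ completes the lower bound.

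I expect the main obstacle to be controlling the possible oscillation of $X(t)$: the positive growth near the fittest class is felt by the total population only indirectly, through $x(t)=\mu(t)(U_0)$, and $X(t)$ need not be monotone. The no-dip claim is exactly where this is resolved, and it is precisely there that Lemma \ref{extinction} — which makes $x(t)$ and $X(t)$ asymptotically equal — is indispensable; without it one could not rule out mass draining into less-fit strategies while $X$ lingers below $K_{\mathfrak{Q}}$. A secondary point requiring care is the quantifier order in choosing $\eta$, $U_0$, $\epsilon_0$, and $T_0$, so that the constants in the differential inequality and the concentration bound remain compatible.
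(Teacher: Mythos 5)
Your proof is correct, and it rests on the same two pillars as the paper's: the upper bound $\limsup_{t\to\infty}X(t)\le K_{\mathfrak{Q}}$ imported from Theorem \ref{BS}, and, for the lower bound, the combination of Lemma \ref{extinction} (mass concentrates on any neighborhood $U_0$ of $[\mathfrak{Q}]_\mathcal{R}$) with the differential inequality $\frac{d}{dt}\mu(t)(U_0)\ge c\,\mu(t)(U_0)$, $c>0$, valid while $X$ sits below a level strictly under $K_{\mathfrak{Q}}$. Where you diverge is in how that inequality is converted into $\liminf_t X(t)\ge K_{\mathfrak{Q}}$: the paper invokes the fluctuation method (\cite{HHG}, \cite[Lemma A.20]{Thi03}) to produce times $t_j$ with $\mu(t_j)(U)\to\liminf_t\mu(t)(U)$ and $\frac{d}{dt}\mu(t_j)(U)=0$, and gets the contradiction $0>0$ there; you instead run an elementary first-crossing (``ratchet'') argument showing $x(t)=\mu(t)(U_0)$ can never dip back below $\xi-\epsilon_0$ once it exceeds that level, plus an exponential-growth launch step to show it must exceed it. Your route is self-contained (no external fluctuation lemma) at the cost of the extra bookkeeping with $\epsilon_0$ and the first-crossing time; it also dispenses with the paper's separate argument that $\limsup_t X(t)\ge K_{\mathfrak{Q}}$ (steps 1--4 of the paper's proof), which indeed becomes redundant once the $\liminf$ bound is in hand. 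The quantifier order you flag ($\xi$ fixes $\eta$ and $U_0$; $\epsilon_0$ then fixes $T_0$) is handled consistently, and your positivity claims ($x(0)>0$ from the support hypothesis, $x(t)>0$ from $\dot x\ge -\sup f_2\cdot x$) are justified, so there is no gap.
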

\begin{proof} If $K_{\mathfrak{Q}} = 0$, then from \eqref{limsupbound} we
get $$\lim_{t \to \infty} \mu(t) (Q) = 0,$$ and the result follows.

Next suppose $ 0< K_{\mathfrak{Q}}  < \infty $.
 We know from Theorem \ref{BS} that $$ \limsup_{t\to \infty } \mu(t) (Q) \leq
K_{\mathfrak{Q}} .$$ Suppose $\alpha^\infty = \limsup_{t\to \infty }
\mu(t) (Q) < K_{\mathfrak{Q}} $, then we have the following.

\begin{enumerate}
\item By definition of $K_{\mathfrak{Q}} $, there exists a $\overline{K}$ such that $\alpha^\infty <\overline{ K} < K_{\mathfrak{Q}} $ and
 for all $\mathfrak{Q}  \in [\mathfrak{Q} ]_\mathcal{R} $,
$ 1 =\cR(K_{\mathfrak{Q}} ,\mathfrak{Q} ) <
\cR(\overline{K},\mathfrak{Q} )\leq \cR(\alpha^\infty, \mathfrak{Q} )$,
i.e., for all $\mathfrak{Q}  \in [\mathfrak{Q} ]_\mathcal{R} $, $
\cR(\overline{K},\mathfrak{Q} ) > 1. $

\item Since $\mathcal{R}(\overline{K},\cdot)$ is continuous and $[\mathfrak{Q} ]_\mathcal{R}$ is compact,
 there
 exists a $\underline{\mathfrak{Q}}$ where $\cR(\overline{K},\cdot) $ has a minimal value on the compact
 set $[\mathfrak{Q} ]_\mathcal{R}$. Using 1. we can find $\xi$ such that $ 1 < \xi < \cR(\overline{K},\underline{\mathfrak{Q}}). $

\item Since $Q$ is compact $\cR(\overline{K},\cdot)
 $ is uniformly continuous. So if $ 0 < \epsilon <
 \frac{\cR(\overline{K},\underline{\mathfrak{Q}})- \xi}{2} $ there exists some $\delta
> 0$ such that for all $q$, $ \hat q \in Q $, the distance between
$(\cR(\overline{K},q)$, $\cR(\overline{K},\hat q ))$ is less than
 $\epsilon $, provided that the distance between $(\overline{K},q)$,$(\overline{K},\hat q)$ is
smaller than $\delta. $
 Hence, $ 1 < \xi < \cR(\overline{K},q) $ on $U =
U_\delta ([\mathfrak{Q}]_\mathcal{R} ) = \{q \in Q :
d(q,[\mathfrak{Q}]_\mathcal{R}  )< \delta \}$, where
$d(q,[\mathfrak{Q}]_\mathcal{R} )$ is the distance from the point
$q$ to the set $[\mathfrak{Q}]_\mathcal{R} $.

\item Since $\mu(0)(U) > 0$, this implies $\mu(t)(U) > 0$ for all $t
\ge 0$. By assumption, there exists some $r > 0$ such that
$X(t)= \mu(t)(Q) \le \overline{K}$ for all $t \ge r$. If $X(t) = \mu(t)(Q)$ and
$t \ge r$, we have
\[
\begin{array}{rl}
\frac{d}{dt}\mu(t) (U) = & \displaystyle \int_U [ \cR(X(t),q) - 1]
f_2(X(t),q) d\mu (t)(q)
\\
\ge & \displaystyle \int_U [ \cR(\overline{K},q) - 1]
f_2(X(t),q) d\mu (t)(q)\\
\ge & \displaystyle \int_U \Bigl [\inf_{ \mathfrak{Q}  \in
[\mathfrak{Q}]_\mathcal{R} } \cR(\overline{K},\mathfrak{Q} )
-\epsilon -1\Bigr] f_2(X(t),q) d\mu (t)(q)
\\
= & \displaystyle \Bigl [\inf_{ \mathfrak{Q}  \in
[\mathfrak{Q}]_\mathcal{R} } \cR(\overline{K},\mathfrak{Q} )
-\epsilon -1\Bigr]\int_U f_2(X(t),q)d\mu (t)(q)
\\
 \ge & \displaystyle
\bigl[\cR(\overline{K},\underline{\mathfrak{Q}}) -\epsilon -1 \bigr]
\inf _{\overline{U}} f_2(0,q) \mu(t)(U).
\end{array}
\]

Since $\bigl[\cR(\overline{K},\underline{\mathfrak{Q}}) -\epsilon -1
\bigr] \inf _{U} f_2(0,q) > 0 $, $\mu(t)(U)$ increases exponentially
and hence $X(t) = \mu(t)(Q)$ grows unbounded. This is a
contradiction.

This implies that $\limsup_{t \to \infty } \mu(t) (Q) \ge
K_{\mathfrak{Q}}  > 0$. Combined with the  statement $\limsup_{t\to
\infty} \mu(t) (Q) \le
 K_{\mathfrak{Q}} $ from Theorem \ref{BS}, this yields $$\limsup_{t \to \infty } \mu(t) (Q) =  K_{\mathfrak{Q}}  >
 0.$$

\item Since $ K_{\mathfrak{Q}}  =\limsup_{t\to \infty } \mu(t) (Q)$, it will suffice
to show $\liminf_{t \to \infty} \mu(t) (Q) = K_{\mathfrak{Q}} .$ If
$U = U_\delta ([\mathfrak{Q} ]_\mathcal{R}) = \{q \in Q :
d(q,[\mathfrak{Q} ]_\mathcal{R} )< \delta \}$, where
$d(q,[\mathfrak{Q} ]_\mathcal{R})$ is the distance from the point
$q$ to the set $[\mathfrak{Q} ]_\mathcal{R}$, then $U =
U_\delta([\mathfrak{Q} ]_\mathcal{R})$ is a relatively open subset
of $Q$ which contains $[\mathfrak{Q} ]_\mathcal{R}$. Set $U^c = Q
\setminus U$, then $\mu(t)(Q) =\mu(t) (U) + \mu(t)(U^c)$.
 By Lemma
\ref{extinction}, $\mu(t) (U^c) \to 0$, as $t \to \infty$.
 So we only need to find a suitable $\delta$ and derive a contradiction from the assumption
 \[
 \liminf_{t \to \infty} \mu(t) (U) = \alpha_\infty < K_{\mathfrak{Q}}  .\]

 \item This initial argument is
strictly analogous to the above argument. Suppose that
$\alpha_\infty:= \liminf_{t\to \infty} \mu(t)(Q) < K_{\mathfrak{Q}}
\le \alpha^\infty$. Pick $\overline{K} \in (\alpha_\infty,
K_{\mathfrak{Q}} )$, $\epsilon$, $\delta$,
$\underline{\mathfrak{Q}}$ exactly as above. It follows from one
version of the fluctuation method (\cite{HHG}, see also \cite[Lemma
A.20]{Thi03}) that there exists a sequence $(t_j)$ such that $t_j
\to \infty$, $\mu(t_j) (U) \to \alpha_\infty $, for $j \to \infty $,
and $\frac{d}{dt}\mu(t_j) (U) = 0$ for all $j$. Since $\alpha_\infty
< \overline{K}$, if $j$ is large enough, $X(t_j) < \overline{K} $
and just like above we have the following estimates:
\[
\begin{array}{rl}
0=\frac{d}{dt}\mu(t_j) (U) = & \displaystyle \int_U [ \cR(X(t_j),q)
- 1] f_2(X(t_j),q) d\mu (t_j)(q)
\\
\ge & \displaystyle \int_U [ \cR(\overline{K},q) - 1]
f_2(X(t_j),q) d\mu (t_j)(q)\\
\ge & \displaystyle \int_U \Bigl [\inf_{ \mathfrak{Q}  \in
[\mathfrak{Q}]_\mathcal{R} } \cR(\overline{K},\mathfrak{Q} ) -\epsilon -1\Bigr]
f_2(X(t_j),q) d\mu (t_j)(q)
\\
= & \displaystyle \Bigl [\inf_{ \mathfrak{Q}  \in
[\mathfrak{Q}]_\mathcal{R} } \cR(\overline{K},\mathfrak{Q} ) -\epsilon
-1\Bigr]\int_U f_2(X(t_j),q)d\mu (t_j)(q)
\\
 \ge & \displaystyle
\bigl[\cR(\overline{K},\underline{\mathfrak{Q}}) -\epsilon -1 \bigr] \inf
_{\overline{U}} f_2(0,q) \mu(t_j)(U),
\end{array}
\]
 and for large $j$ we obtain the contradiction $0
> 0 $, since $\mu(t_j)(U)> 0$. This contradiction implies that \[
 \liminf_{t \to \infty} \mu(t) (U) = \alpha_\infty \ge K_{\mathfrak{Q}}  = \limsup_{t \to \infty } \mu(t) (Q),\]
and our result is immediate, i.e., $\mu(t) (Q) \to K_{\mathfrak{Q}}
$ as $t \to \infty$.
\end{enumerate}
\end{proof}

\begin{proposition}
\label{adjointprop} Assume (A1)-(A5). Let $\mu (t) $ be a solution
to \eqref{selection}:

\begin{enumerate}
\item {If $[\mathfrak{Q} ]_\mathcal{R} \cap
supp( \mu(0)) \neq \emptyset  $ and
$[\breve{q}]_{\mathcal{R}} \neq [\mathfrak{Q} ]_\mathcal{R}$, then there exists $\delta >0$, such that $\mu^{*}_{\mathcal{R}}(t)( B_{\delta}(
[\breve{q}]_{\mathcal{R}})) \rightarrow 0, \quad \text{ as } \quad t
\rightarrow \infty .$
\item If $[\mathfrak{Q} ]_\top \cap
supp( \mu(0)) \neq \emptyset, $ then we have $$\lim_{t \to \infty} \mu_{\top}^*(t)= K_{\mathfrak{Q}}\delta_{[q]_{\top}}, \text { for any } q \in Q.$$}

\end{enumerate}
\end{proposition}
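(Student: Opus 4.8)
The plan is to reduce both parts to the two preceding lemmas by unwinding the definition of the pushforward measure $\mu_{\mathcal{R}}^*(t)=\phi_{\bot,\mathcal{R}}^*[\mu(t)]$, namely $\mu_{\mathcal{R}}^*(t)(E)=\mu(t)(\phi_{\bot,\mathcal{R}}^{-1}(E))$, together with the fact that each $\mu(t)$ is a nonnegative measure and hence monotone on Borel sets. Since Lemma \ref{extinction} and Lemma \ref{populationlimit} are already in hand, there is no deep obstacle here; the only genuine work is the set-theoretic bookkeeping that translates a ball in the quotient metric $\rho$ back into a subset of $Q$, plus a small point about matching hypotheses in Part 2.

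For Part 1, first I would rewrite $\mu_{\mathcal{R}}^*(t)(B_\delta([\breve q]_{\mathcal{R}}))=\mu(t)\bigl(\phi_{\bot,\mathcal{R}}^{-1}(B_\delta([\breve q]_{\mathcal{R}}))\bigr)$. Since $\rho([q]_{\mathcal{R}},[\breve q]_{\mathcal{R}})=|\mathcal{R}(0,q)-\mathcal{R}(0,\breve q)|$, the preimage is exactly the set $V_\delta=\{q\in Q:|\mathcal{R}(0,q)-\mathcal{R}(0,\breve q)|<\delta\}$, which is open because $\mathcal{R}(0,\cdot)$ is continuous. Because $\mathfrak{Q}$ maximizes $\mathcal{R}(0,\cdot)$ and $[\breve q]_{\mathcal{R}}\neq[\mathfrak{Q}]_{\mathcal{R}}$, we have $\mathcal{R}(0,\breve q)<\mathcal{R}(0,\mathfrak{Q})$; setting $m=\frac{1}{2}(\mathcal{R}(0,\breve q)+\mathcal{R}(0,\mathfrak{Q}))$ and choosing $\delta<\frac{1}{2}(\mathcal{R}(0,\mathfrak{Q})-\mathcal{R}(0,\breve q))$ forces $V_\delta\subseteq\{q:\mathcal{R}(0,q)<m\}$. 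Then $U_0:=\{q\in Q:\mathcal{R}(0,q)>m\}$ is an open set containing all of $[\mathfrak{Q}]_{\mathcal{R}}$ (every point of that class has $\mathcal{R}$-value $\mathcal{R}(0,\mathfrak{Q})>m$) and disjoint from $V_\delta$, so $V_\delta\subseteq Q\setminus U_0$. Lemma \ref{extinction} applied to $U_0$ gives $\mu(t)(Q\setminus U_0)\to 0$, and monotonicity yields $\mu_{\mathcal{R}}^*(t)(B_\delta([\breve q]_{\mathcal{R}}))=\mu(t)(V_\delta)\le\mu(t)(Q\setminus U_0)\to 0$.

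For Part 2, I would use that the top partition collapses $Q$ to the single class $[q]_\top=Q$, so $[Q]_\top$ is a one-point space and every finite Borel measure on it is a scalar multiple of $\delta_{[q]_\top}$. Unwinding the pushforward at that single point gives $\mu_\top^*(t)(\{[q]_\top\})=\mu(t)(\phi_{\bot,\top}^{-1}(\{[q]_\top\}))=\mu(t)(Q)$, hence $\mu_\top^*(t)=\mu(t)(Q)\,\delta_{[q]_\top}$, and the claim reduces to $\mu(t)(Q)\to K_{\mathfrak{Q}}$, which is precisely Lemma \ref{populationlimit}. The one point needing care, and the closest thing to an obstacle, is that the limiting constant $K_{\mathfrak{Q}}$ is the carrying capacity of the \emph{fittest} class, so invoking Lemma \ref{populationlimit} genuinely requires $[\mathfrak{Q}]_{\mathcal{R}}\cap supp(\mu(0))\neq\emptyset$; the hypothesis $[\mathfrak{Q}]_\top\cap supp(\mu(0))\neq\emptyset$ should be read as guaranteeing that the fittest class is represented initially, since otherwise the total population would equilibrate at the carrying capacity of the fittest \emph{present} class rather than at $K_{\mathfrak{Q}}$.
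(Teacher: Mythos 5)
Your proof is correct and follows essentially the same route as the paper: both parts are obtained by unwinding the pushforward identity and invoking Lemma \ref{extinction} and Lemma \ref{populationlimit} respectively, the only cosmetic difference being that you build the separating open set explicitly as a superlevel set of $\mathcal{R}(0,\cdot)$ where the paper appeals to normality of the metric space to separate the closed set $V$ from $[\mathfrak{Q}]_{\mathcal{R}}$. Your closing observation on Part 2 is well taken: the paper's proof silently uses the stronger hypothesis $[\mathfrak{Q}]_{\mathcal{R}}\cap supp(\mu(0))\neq\emptyset$ required by Lemma \ref{populationlimit}, whereas you make the needed reading of the stated hypothesis explicit.
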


\begin{proof}

\begin{enumerate}
\item Let $\delta>0 $ be such that $B_{\delta}( [\breve{q}]_{\mathcal{R}}) \cap
B_{\delta}( [\mathfrak{Q} ]_\mathcal{R}) = \emptyset $. Then $
\mu_{\mathcal{R}}^{*}(t)( B_{\frac{\delta}{2}}(
[\breve{q}]_\mathcal{R})) \rightarrow 0$. Indeed if $ V =\{ q : |
\mathcal{R}(0, q) -\mathcal{R}(0, \breve{q}| \le \frac{\delta}{2}
\}$, then $V$ is closed and $V\cap [\mathfrak{Q} ]_\mathcal{R} =
\emptyset$. Hence these two sets can be separated by open sets in
$(Q,d)$. Using Lemma \ref{extinction} our result is immediate once
we notice that :
$$ \mu_{\mathcal{R}}^{*}(t)( B_{\frac{\delta}{2}}( [\breve{q}]_\mathcal{R})) =
\mu(t) (\{q : |\mathcal{R}(0,q) -\mathcal{R}(0,\breve{q})| <
\frac{\delta}{2} \}) \leq \mu(t)(V) \rightarrow 0.$$

\item This is immediate from Lemma \ref{populationlimit}, since $$ \phi_{\top,\bot}^*\mu(t)([Q]_{\top})
 = \mu(t)(\phi_{\top,\bot}^{-1} ([Q]_{\top})) =\mu(t) (Q)\rightarrow K_{\mathfrak{Q}}. $$

\end{enumerate}

\end{proof}

\subsection{Proof of Theorem \ref{CSS1}}

\begin{proof}
We need to show that for every $f\in C([Q]_\mathcal{R}),$
\[
\int_{[Q]_\mathcal{R}} f([q]_\mathcal{R}) d\mu^{*}_{\mathcal{R}}(t)
([q]_\mathcal{R}) \to \int_{[Q]_\mathcal{R}} f([q]_\mathcal{R})
dK_{\mathfrak{Q}} \delta_{[\mathfrak{Q} ]_\mathcal{R}}
=f([\mathfrak{Q} ]_\mathcal{R}) K_{\mathfrak{Q}}  , ~\text {as
}\quad t \to \infty .
\]By Lemma \ref{populationlimit}, it is
sufficient to show that
\[
 \int_{[Q]_\mathcal{R}} f([q]_\mathcal{R}) d\mu_{\mathcal{R}}^*(t) ([q]_\mathcal{R})
  - f([\mathfrak{Q} ]_\mathcal{R}) \mu^{*}_{\mathcal{R}}(t)([Q]_\mathcal{R}) \to 0, \quad t \to \infty
.
\]
Let $\epsilon > 0$ and define $U = \{ [q]_\mathcal{R}\in
[Q]_\mathcal{R}: |f([q]_\mathcal{R}) - f([\mathfrak{Q}
]_\mathcal{R})| < \epsilon \}$. Then $U$ is an open subset of
$[Q]_\mathcal{R}$ which contains $[\mathfrak{Q} ]_\mathcal{R}$.
Using an argument similar to Part II in Lemma \ref{extinction} and
Proposition \ref{adjointprop} 1.,
$\mu^{*}_{\mathcal{R}}(t)([Q]_\mathcal{R} \setminus U) \to 0$ as $t
\to \infty$. Now
\[
\begin{array}{lll}
\Bigl | \int_{[Q]_\mathcal{R}} f([q]_\mathcal{R})
d\mu_{\mathcal{R}}^*(t) ([q]_\mathcal{R}) - f([\mathfrak{Q}
]_\mathcal{R}) \mu_{\mathcal{R}}^*(t)([Q]_\mathcal{R}) \Bigr |
&\\\le \int_U | f([q]_\mathcal{R}) - f([\mathfrak{Q} ]_\mathcal{R})|
d\mu_{\mathcal{R}}^*(t)([q]_\mathcal{R})\\
\quad  + \int_{[Q]_\mathcal{R} \setminus U} |f([q]_\mathcal{R})|
d\mu_{\mathcal{R}}^*(t)([q]_\mathcal{R}) + |f([\mathfrak{Q}
]_\mathcal{R})| \mu_{\mathcal{R}}^*(t) ([Q]_\mathcal{R} \setminus U)
\\
\le \epsilon \mu_{\mathcal{R}}^* (t) ([Q]_\mathcal{R})  + \bigl (
\sup |f| + |f([\mathfrak{Q} ]_\mathcal{R})| \bigr )
\mu_{\mathcal{R}}^*(t) ([Q]_\mathcal{R}\setminus U).
\end{array}
\]
Since $\mu_{\mathcal{R}}^*(t) ([Q]_\mathcal{R} \setminus U) \to 0$
as $t \to \infty$ and $\limsup_{t\to \infty}
\mu_{\mathcal{R}}^*(t)([Q]_{\mathcal{R}}) \le K_{\mathfrak{Q}} $, by
Proposition \ref{adjointprop} we have:
\[
\limsup_{t \to \infty } \Bigl | \int_Q f(q) \mu_{\mathcal{R}}^*(t)
([q]_\mathcal{R}) - f([\mathfrak{Q} ]_\mathcal{R})
\mu_{\mathcal{R}}^*(t)([Q]_\mathcal{R}) \Bigr | \le \epsilon
K_{\mathfrak{Q}} .
\]
Since this holds for every $\epsilon > 0$, the limit superior is 0
and the assertion follows.
\end{proof}

\begin{theorem}
\label{re:convergence}For $[Q]_{\bot} $ we have the following. Assume
(A1)-(A5). If $\mu(t;u,\delta_{\hat q})$ is a solution evolving
according to \eqref{selection} and such that $[\mathfrak{Q}
]_{\mathcal{R}} \cap supp( \mu(0)) \neq \emptyset$, then for
every $u$ there exists $\mu_{\infty}(u) \in \mathcal{M}_{w}([Q]_{\bot})$ which satisfies the
following properties:
\begin{enumerate}
\item $\mu(t;u, \delta_{\hat q}) \to \mu_{\infty}(u),\text{ as } \; t \to \infty $ in the weak$^*$
topology,
\item $supp (\mu_{\infty}(u)) \subseteq [\mathfrak{Q}]_\mathcal{R} \cap supp(u).$
\item If $[\mathfrak{Q}]_\mathcal{R}=\{\mathfrak{Q}\}$, then
$\mu(t) \to K_{\mathfrak{Q}} \delta_{\mathfrak{Q}}, \; t \to \infty
$ in the weak$^*$ topology.
%\item[]\emph{iv.} For $ F \subseteq [\mathfrak{Q}]$, $\mu_{\infty}(F)>0 $ if and only
%$u(F)>0.$
\end{enumerate}

\end{theorem}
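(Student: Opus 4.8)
The plan is to exploit the special multiplicative structure of the pure replicator dynamics \eqref{selection}. Because $\gamma=\delta_{\hat q}$, the flow never transports mass between distinct strategies: each Borel set evolves by its own net growth rate. So I would first verify that the unique solution of \eqref{selection} furnished by Theorem \ref{main} is given explicitly by $\mu(t)=\rho(t,\cdot)\,u$ with $\rho(t,q)=\exp\big(\int_0^t[f_1(X(s),q)-f_2(X(s),q)]\,ds\big)$ and $X(t)=\mu(t)(Q)$ determined self-consistently. This representation is what makes everything tractable: it shows $\mu(t)\ll u$ for every $t$, so any weak$^*$ limit is absolutely continuous with respect to $u$ and hence supported in $\text{supp}(u)$; and it reduces the required weak$^*$ convergence to showing that $\int_Q f(q)\rho(t,q)\,du(q)$ converges for each $f\in C(Q)$, which I would obtain from pointwise convergence of $\rho(t,\cdot)$ together with an $L^1(u)$-domination via the dominated convergence theorem.

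Next I would split $Q$ into the fittest class $[\mathfrak{Q}]_\mathcal{R}$ and its complement. Off the class, for $q$ with $\mathcal{R}(0,q)<\mathcal{R}(0,\mathfrak{Q})$, assumption (A4) gives $\mathcal{R}(K_{\mathfrak{Q}},q)<\mathcal{R}(K_{\mathfrak{Q}},\mathfrak{Q})=1$, and since $X(t)\to K_{\mathfrak{Q}}$ by Lemma \ref{populationlimit} the exponent tends to $-\infty$; hence $\rho(t,q)\to0$, recovering and refining Lemma \ref{extinction}. On the class, (A4) forces $\mathcal{R}(X,q)=\mathcal{R}(X,\mathfrak{Q})$ for all $X$, so the exponent collapses to $\int_0^t f_2(X(s),q)\,h(s)\,ds$ where $h(s):=\mathcal{R}(X(s),\mathfrak{Q})-1\to0$ and $f_2(X(s),q)\to f_2(K_{\mathfrak{Q}},q)\ge\varpi>0$. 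Thus the entire convergence question reduces to the convergence of the scalar integrals $\int_0^t f_2(X(s),q)\,h(s)\,ds$ along the class; if these converge to some $\ell(q)$, I would set $\rho_\infty:=e^{\ell}$ on the class and $0$ off it and take $\mu_\infty(u):=\rho_\infty\,u$, which by the dominated convergence argument above is the desired weak$^*$ limit and automatically satisfies $\text{supp}(\mu_\infty)\subseteq[\mathfrak{Q}]_\mathcal{R}\cap\text{supp}(u)$, yielding parts 1 and 2.

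The main obstacle is precisely the convergence of $\int_0^t f_2(X(s),q)\,h(s)\,ds$: knowing only $h(s)\to0$ is not enough, because $X(t)$ may approach $K_{\mathfrak{Q}}$ non-monotonically and $h$ may change sign, so the conditional cancellation must be controlled. My plan here is to upgrade Lemma \ref{extinction} to a quantitative, time-integrable decay of the off-class mass and feed it into the scalar equation for $X$. Writing $\dot X=h(t)\int_{[\mathfrak{Q}]_\mathcal{R}}f_2(X,\cdot)\,d\mu+B(t)$, where $B(t)$ is the off-class contribution with $|B(t)|\le C\,\mu(t)(Q\setminus[\mathfrak{Q}]_\mathcal{R})$, and using $X\to K_{\mathfrak{Q}}$ together with $f_2\ge\varpi$, one sees that $X$ asymptotically obeys a stable scalar equation $\frac{d}{dt}(X-K_{\mathfrak{Q}})\approx-\lambda(t)(X-K_{\mathfrak{Q}})+B(t)$ with $\lambda(t)$ bounded below by a positive constant; if $\int_0^\infty|B(t)|\,dt<\infty$ then $X-K_{\mathfrak{Q}}$ has finite total variation, whence $\int_0^\infty|h|<\infty$ and the required integrals converge (this also yields the uniform bound $\rho(t,q)\le e^{(\sup f_2)\int_0^\infty|h|}$ needed for $L^1(u)$-domination). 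The delicate point I expect to fight is that strategies just outside the class have reproductive numbers arbitrarily close to $\mathcal{R}(0,\mathfrak{Q})$ and hence decay arbitrarily slowly, so the time-integrability $\int_0^\infty\mu(t)(Q\setminus[\mathfrak{Q}]_\mathcal{R})\,dt<\infty$ is the real crux and must be extracted carefully from (A4) and the density formula. Finally, part 3 is immediate and needs none of this machinery: when $[\mathfrak{Q}]_\mathcal{R}=\{\mathfrak{Q}\}$ the limit is supported at the single point $\mathfrak{Q}$ and has total mass $\lim_{t\to\infty}X(t)=K_{\mathfrak{Q}}$ by Lemma \ref{populationlimit}, forcing $\mu(t)\to K_{\mathfrak{Q}}\delta_{\mathfrak{Q}}$.
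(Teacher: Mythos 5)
Your overall route (the explicit density $\mu(t)=\rho(t,\cdot)\,u$ with $\rho(t,q)=\exp\int_0^t(f_1-f_2)(X(s),q)\,ds$, pointwise limits of $\rho$, dominated convergence) is a legitimate alternative to the paper's argument, and your reduction is correct: off the class $\rho\to0$, and on the class everything hinges on convergence of $\int_0^t f_2(X(s),q)\,h(s)\,ds$ with $h(s)=\mathcal{R}(X(s),\mathfrak{Q})-1$. But the step you yourself flag as ``the real crux'' is a genuine gap, and the machinery you propose would not close it under (A1)--(A5). You want $\int_0^\infty|h|\,dt<\infty$ via $\int_0^\infty\mu(t)(Q\setminus[\mathfrak{Q}]_\mathcal{R})\,dt<\infty$; Lemma \ref{extinction} gives exponential decay of $\mu(t)(Q\setminus U_0)$ for any \emph{fixed} neighborhood $U_0$ of the class, but says nothing quantitative about the mass trapped in $U_0\setminus[\mathfrak{Q}]_\mathcal{R}$, where strategies have reproductive numbers arbitrarily close to the maximum and decay arbitrarily slowly; there is no reason this mass is time-integrable for general $u$. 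Moreover your linearization $\frac{d}{dt}(X-K_{\mathfrak{Q}})\approx-\lambda(t)(X-K_{\mathfrak{Q}})+B(t)$ with $\lambda$ bounded below by a positive constant needs a nondegeneracy of $X\mapsto\mathcal{R}(X,\mathfrak{Q})$ at $K_{\mathfrak{Q}}$ of the kind assumed only later in (A6); under (A1)--(A5) that map may be flat at $K_{\mathfrak{Q}}$, so $\lambda(t)$ need not be bounded away from zero.

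The missing idea --- and the one the paper's proof rests on --- is that absolute integrability of $h$ is not needed because $h$ is \emph{eventually of one sign}. Since $\dot X\le[\mathcal{R}(X,\mathfrak{Q})-1]\int_Qf_2\,d\mu\le0$ whenever $X\ge K_{\mathfrak{Q}}$, the total population can cross the level $K_{\mathfrak{Q}}$ only downward; hence either $X(t)>K_{\mathfrak{Q}}$ for all $t$ or $X(t)\le K_{\mathfrak{Q}}$ for all large $t$, and in either case $h$ has a fixed sign eventually. Consequently $\frac{d}{dt}\mu(t)(F)=h(t)\int_Ff_2(X,q)\,d\mu$ is eventually single-signed for every Borel $F\subseteq[\mathfrak{Q}]_\mathcal{R}$, so $\mu(t)(F)$ is eventually monotone and bounded, hence convergent --- equivalently, your exponent $\int_0^tf_2\,h\,ds$ converges because it is eventually monotone and bounded, with no cancellation to control. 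The paper then assembles $\mu_\infty$ from these setwise limits together with $\mu(t)([\mathfrak{Q}]_\mathcal{R}^c)\to0$, invoking the result cited from Royden (a setwise limit of finite measures is a measure) and noting that setwise convergence implies weak$^*$ convergence. If you graft this single-sign observation into your scheme, the convergence of $\rho(t,q)$ on the class follows and your dominated-convergence argument goes through; your treatment of part 3 matches the paper's.
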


\begin{proof}
\begin{enumerate}

\item First notice if $ F \subseteq [\mathfrak{Q}]_\mathcal{R}$ then
\begin{eqnarray}
 \frac{d}{dt}{\mu}(t; u)(F)&=\int_F \left
({f}_1(\mu(t)(Q), q)-{f}_{2}(\mu(t)(Q),\hat q) \right )d\mu(t)(\hat
q)     \nonumber \\
   &=(\mathcal{R}(X,\mathfrak{Q})- 1)  \int_Ff_2(X,q)d\mu  . \label{eq:equilibria}
   \end{eqnarray}
Also,
\begin{equation}
  \dot{X}(t)  = \int_Q (f_1(X,q)- f_2(X,q))
  d\mu(t).\label{eq:equilibria2}
\end{equation}

Using arguments similar to those in the proof of Theorem \ref{BS}
together with equations \eqref{eq:equilibria} and
\eqref{eq:equilibria2} we can show that $\mu(t)(F)$ is eventually
bounded and monotone and hence converges.

 So if $E \in \mathcal{B}(Q)$
we define
\begin{equation}
\begin{array}{ll}
\mu_{\infty}(u)(E) & = \lim_{t\to \infty} \mu(t)(E) \\
& = \lim_{t\to \infty} \mu(t)(E \cap [\mathfrak{Q}]_{\mathcal{R}}) +
\lim_{t\to \infty} \mu(t)( E \cap [\mathfrak{Q}]_{\mathcal{R}}^c).
\end{array}
\label{eq:xd3}
\end{equation}We now notice that $\lim_{t\to
\infty} \mu(t)( E \cap [\mathfrak{Q}]_{\mathcal{R}}^c)=0.$  Indeed,
first notice that for every $n$,
$$ \mu(t)(Q)= \mu(t)[B_{\frac{1}{n}}([\mathfrak{Q}]_\mathcal{R})] +
\mu(t)[\left (B_{\frac{1}{n}}([\mathfrak{Q}]_{\mathcal{R}})\right ) ^c] .$$ Using the fact that
for any $\epsilon >0,$ $\lim_{t\to \infty} \mu(t)(
(B_\epsilon([\mathfrak{Q}]_{\mathcal{R}})^c) =0$ and Lemma \ref{populationlimit}
we see that for every $n$, $ K_{\mathfrak{Q}}
=\lim_{t \rightarrow \infty} \mu(t)(B_{\frac{1}{n}}([\mathfrak{Q}]_{\mathcal{R}}))$.

Since $ B_{\frac{1}{n+1}}([\mathfrak{Q}]_{\mathcal{R}}) \subseteq
B_{\frac{1}{n}}([\mathfrak{Q}]_{\mathcal{R}})$ and
 $[\mathfrak{Q}]_{\mathcal{R}} =\cap_{n=1}^\infty B_{\frac{1}{n}}([\mathfrak{Q}]_{\mathcal{R}})$ we have
$$\mu(t)([\mathfrak{Q}]_{\mathcal{R}}) = \mu(t)\bigl( \cap_{n=1}^\infty B_{\frac{1}{n}}([\mathfrak{Q}]_{\mathcal{R}})\bigr )
=\lim_{n \to \infty} \mu(t)(B_{\frac{1}{n}}([\mathfrak{Q}]_{\mathcal{R}})).$$
Hence, taking limits as $t \to \infty$  on both sides we get that
$K_{\mathfrak{Q}} = \lim_{t \to \infty} \mu(t)([\mathfrak{Q}]_{\mathcal{R}}).$ Since
$$ \mu(t)(Q)= \mu(t)([\mathfrak{Q}]_{\mathcal{R}}) +
\mu(t)([\mathfrak{Q}]_{\mathcal{R}}^c), $$ we have that  $\lim_{t
\to \infty}\mu(t)([\mathfrak{Q}]_{\mathcal{R}}^c) =0.$
 Hence,
$$ \mu_{\infty}(u)(E)= \lim_{t \to \infty} \mu(t)(E \cap [\mathfrak{Q}]_{\mathcal{R}}).$$

It then follows from \cite[pg.270 ]{Royden} that $\mu_{\infty}(u)$
as defined is a finite signed Borel measure which satisfies property
1 since \eqref{eq:equilibria} and \eqref{eq:equilibria2} imply that
we have setwise convergence (weak convergence) which is stronger
than weak$^*$ convergence.

\item This follows easily from the integral representation formula for
pure selection and Theorem \ref{CSS1}.

\item This follows easily from properties 1 and 2 since $ [\mathfrak{Q}]_{\mathcal{R}}
$ consists of a single point and Lemma \ref{populationlimit} says
that $\mu(t) (Q)\rightarrow K_{\mathfrak{Q}}.$
\end{enumerate}
\end{proof}

\subsection{Small Mutation of Discrete Pure Replicator Dynamics}
The results of this section are for discrete systems. This means
that for the pair $(u,\gamma)$, for every $\hat q$, the support of
$\gamma(\hat q)$ along with the support of $u$ is a fixed finite
set. This means that the model will always be supported on this
finite set. Mathematically studying the dynamics of the above
mentioned is equivalent to studying the dynamics of an EGT model on
a finite Polish space. Since the latter is technically easier to
handle we make the assumption that our strategy space is finite.
With this said, the main result of this section is that we
demonstrate that there is a neighborhood around the pure replicator
kernel where unique $CSS's$ are obtained.

\subsubsection {Discrete System}
To establish our results we will make use of Theorem \ref{pert1} in the appendix and the following additional assumption:
\begin{itemize}
\item [(A6)] $f_1$ and $f_2$ are $C^1$ in $X$ and $f_{1,X}(K_{\mathfrak{Q}},\mathfrak{Q})- f_{2,X}(K_{\mathfrak{Q}},\mathfrak{Q}) < 0 $
\end{itemize}

Furthermore, we assume that   $k_{\mathfrak{q}} > 0$  and
$K_\mathfrak{Q} < \infty $. Thus, by Theorem \ref{BS} the total
population is permanent.

 \begin{remark}In Theorem \ref{pert1} the assumption that $x_0$ is an interior point of $U$
 is  unnecessarily restrictive. One can use one-sided derivatives
 with respect to some cone or wedge \cite{SmithWalt}. If $Q$ is a finite set, which we can assume is $\{e_{i}\}_{i=1}^{N}$, where ${e_i}$
 is the standard unit vector in $\mathbb{R}^N$,
  then $C^{po} =\{ A |~ A \text{ is an } N\times N$ matrix whose rows sum to
  one$\}$, and $\mathcal{M}= span\{\delta_{e_{i}}\}$. In this scenario the
  equilibrium point for the dynamical system \eqref{M} is $(K_{\mathfrak{Q}},0,
  ... ,0)$ a point on the boundary of $\mathbb{R}^N_+$. So we use
  directional derivatives in the direction of the positive cone for $D_xf(x,\lambda)$.
 \end{remark}

\begin{theorem}\label{re:main} Assume (A1)-(A6) hold, $[\mathfrak{Q}]_\mathcal{R}=\{\mathfrak{Q}\}$ and $Q = \widehat{Q}\cup
[\mathfrak{Q}]_\mathcal{R}$ where $\widehat{Q}$ is finite. Let $\delta_{\hat q}$ denote the kernel $ \gamma( \hat q) = \delta_{\hat q} $ i.e. the pure selection kernel where every strategy only gives birth to its own kind and let $\mathcal{U}=
\{ \mu \in \cal{M}|$ $ \mathfrak{Q} \in \text{supp} (\mu) \} $. Then there
exists a neighborhood $U( \delta_{\hat q})$, such that for each
$\gamma \in U(\delta_{\hat q})$, there exists
 $\widehat{\mu}(\gamma)\in {\cal M}_+ $, satisfying
$F(\widehat{\mu}(\gamma), \gamma) = \vec{\textbf{0}}$ ($
\widehat{\mu}(\gamma)$ is an equilibrium point of the system).
Furthermore, $\varphi(t;u,\gamma) \rightarrow
\widehat{\mu}(\gamma)$ as $ t \rightarrow \infty $, where $ u \in
\mathcal{U}_+ $ (the positive cone of $\mathcal{U}$).

\end{theorem}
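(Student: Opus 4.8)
The plan is to reduce the statement to a finite-dimensional perturbation argument and then invoke Theorem \ref{pert1}. Since both $\widehat{Q}$ and $[\mathfrak{Q}]_\mathcal{R}=\{\mathfrak{Q}\}$ are finite, $Q$ is finite; as recorded in the remark preceding the theorem we may identify $Q$ with $\{e_i\}_{i=1}^N$, the measures with $\mathcal{M}=\text{span}\{\delta_{e_i}\}\cong\mathbb{R}^N$, and the admissible kernels $C^{po}$ with the row-stochastic $N\times N$ matrices. Writing $x_i=\mu(\{e_i\})$ and $X=\sum_i x_i$, the model \eqref{M} becomes the ODE $\dot x_i=\sum_j f_1(X,q_j)\gamma(q_j)(\{e_i\})x_j-f_2(X,q_i)x_i$ on $\mathbb{R}^N_+$, with parameter $\gamma$. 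At the pure selection kernel $\gamma_0=\delta_{\hat q}$ (the identity matrix) this collapses to $\dot x_i=[f_1(X,q_i)-f_2(X,q_i)]x_i$, whose relevant equilibrium is $x_0=K_\mathfrak{Q}\delta_\mathfrak{Q}$.

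First I would record that $x_0$ is a globally asymptotically stable equilibrium of the unperturbed system on $\mathcal{U}_+$. This is precisely Theorem \ref{re:convergence}(3): since $[\mathfrak{Q}]_\mathcal{R}=\{\mathfrak{Q}\}$ and $\mathfrak{Q}\in\text{supp}(u)$ for $u\in\mathcal{U}_+$, every solution converges to $K_\mathfrak{Q}\delta_\mathfrak{Q}$. Together with the permanence and compact-attractor conclusions of Theorem \ref{BS} (valid since $k_\mathfrak{q}>0$ and $K_\mathfrak{Q}<\infty$), this supplies the global hypotheses that Theorem \ref{pert1} requires at $\gamma_0$.

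The central computation is the linearization $D_xF(x_0,\gamma_0)$, which must be taken with one-sided derivatives relative to the positive cone because $x_0=(K_\mathfrak{Q},0,\dots,0)$ lies on the boundary of $\mathbb{R}^N_+$, exactly as flagged in the remark. Ordering $\mathfrak{Q}$ first, the rows indexed by $q_i\in\widehat{Q}$ are purely diagonal at $x_0$ (all those $x_i$ vanish there), with entries $f_1(K_\mathfrak{Q},q_i)-f_2(K_\mathfrak{Q},q_i)=f_2(K_\mathfrak{Q},q_i)[\mathcal{R}(K_\mathfrak{Q},q_i)-1]$; these are strictly negative, since $\mathfrak{Q}$ is the unique fittest class and hence by (A4) $\mathcal{R}(K_\mathfrak{Q},q_i)<\mathcal{R}(K_\mathfrak{Q},\mathfrak{Q})=1$. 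The $\mathfrak{Q}$-row has diagonal entry $K_\mathfrak{Q}[f_{1,X}(K_\mathfrak{Q},\mathfrak{Q})-f_{2,X}(K_\mathfrak{Q},\mathfrak{Q})]$ (the undifferentiated contribution vanishes because $\mathcal{R}(K_\mathfrak{Q},\mathfrak{Q})=1$), which is strictly negative by (A6). As every non-$\mathfrak{Q}$ row is diagonal, the Jacobian is triangular, so its spectrum is exactly this collection of strictly negative numbers; hence $x_0$ is hyperbolic and linearly asymptotically stable.

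With hyperbolic stability of $x_0$ at $\gamma_0$, global asymptotic stability at $\gamma_0$, and the compact attractor in hand, Theorem \ref{pert1} applies and yields a neighborhood $U(\delta_{\hat q})$ and a branch of equilibria $\gamma\mapsto\widehat\mu(\gamma)\in\mathcal{M}_+$ with $F(\widehat\mu(\gamma),\gamma)=\vec{\textbf{0}}$, $\widehat\mu(\gamma)\to x_0$ as $\gamma\to\delta_{\hat q}$, and $\varphi(t;u,\gamma)\to\widehat\mu(\gamma)$ for every $u\in\mathcal{U}_+$. I expect the main obstacle to be twofold and to reside entirely in checking the hypotheses of Theorem \ref{pert1}: handling the boundary location of $x_0$ through cone/directional derivatives (so that both the implicit-function step producing $\widehat\mu(\gamma)$ and the eigenvalue count remain legitimate despite $x_0\notin\text{int}\,\mathbb{R}^N_+$), and upgrading the local stability given by the spectrum to the asserted global attraction on $\mathcal{U}_+$—which is exactly the point where the compact attractor from Theorem \ref{BS} and the global conclusion of Theorem \ref{re:convergence} must be fed into Theorem \ref{pert1}.
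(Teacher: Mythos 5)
Your proposal is correct and follows essentially the same route as the paper: verify global attractivity of $K_{\mathfrak{Q}}\delta_{\mathfrak{Q}}$ at the pure selection kernel via Theorem \ref{re:convergence}, obtain the compact attractor from Theorem \ref{BS}, compute the linearization $D_\mu F$ at the equilibrium, show its spectrum is $\{f(K_{\mathfrak{Q}},q)\}_{q\in\widehat{Q}}\cup\{K_{\mathfrak{Q}}f_X(K_{\mathfrak{Q}},\mathfrak{Q})\}$ with all eigenvalues negative by (A4) and (A6), and invoke Theorem \ref{pert1}. The only difference is presentational: you read off the spectrum from the triangular coordinate form of the Jacobian, whereas the paper exhibits the resolvent and eigenvector--eigenvalue pairs explicitly in the measure formalism; the conclusions agree (and your inclusion of the factor $K_{\mathfrak{Q}}$ in the $\mathfrak{Q}$-eigenvalue is the accurate version of the paper's computation).
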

\begin{proof}
Let  $f (X, \hat q) ={f}_{1}(X, \hat q)- f_{2}(X,\hat q)$.
 Let $ F: U_+ \times C^{po}\rightarrow \mathcal{M} $ be as in
\eqref{M}. Note that when $\gamma = \delta_{\hat q}, \; \hat q \in
Q$, then  $K_{\mathfrak{Q}}\delta_{\mathfrak{Q}}$  is an equilibrium
point and from Theorem \ref{re:convergence} it is
 globally attractive for initial measures in $U_+$. Furthermore, the derivative
 $D_{\mu}F$ : $ U_+\times\C^{po} \rightarrow L(\mathcal{M},\mathcal{M})$ given by
$$\begin{array}{lll} D_{\mu}F( \mu,\gamma)(\nu)(E)& = &\bigl[\int_Q
{f}_{1,\mu}(\mu(\textbf{1}), \hat q) \gamma(\hat q)(E)d\mu(\hat
q)- \int_E {f}_{2,\mu}(\mu(\textbf{1}),\hat q)d\mu(\hat
q)\bigr]\nu(\textbf{1}) \\
&& \quad  + \int_Q {f}_1(\mu(\textbf{1}), \hat q) \gamma(\hat
q)(E)d\nu(\hat q) -\int_E {f}_{2}(\mu(\textbf{1}),\hat q)d\nu(\hat
q),\end{array}$$ is continuous.

Evaluating this operator at the equilibrium  $K_{\mathfrak{Q}}
\delta_{\mathfrak{Q}}$  we get
$$ \begin{array}{l} D_{\mu}F(
K_{\mathfrak{Q}}\delta_{\mathfrak{Q}},\delta_{\widehat{q}})(\nu)(E)=K_{\mathfrak{Q}}\bigl
[ \int_E \bigl( {f}_{1,\mu}(K_{\mathfrak{Q}}, \hat q)-
f_{2,\mu}(K_{\mathfrak{Q}},\hat q)\bigr )
d\delta_{\mathfrak{Q}}\bigr ] \nu(\textbf{1}) \\
\hspace{1.7 in}  + \int_E \bigl [{f}_1(K_{\mathfrak{Q}}, \hat q)-
{f}_{2}(K_{\mathfrak{Q}},\hat q)\bigr ] d\nu(\hat q) .\end{array} $$

  If $\lambda \notin \{f(K_{\mathfrak{Q}},\hat
 q)\}_{ \hat q \in \hat Q}
\cup \{f_{X}(K_{\mathfrak{Q}},\mathfrak{Q}) \} $, then
\begin{equation*}  \bigl(\lambda -
D_{\mu}F(K_{\mathfrak{Q}}\delta_{\mathfrak{Q}},\delta_{\widehat{q}})\bigr)^{-1}\nu(E)=
\begin{cases}
\int_{E}c(\nu,\lambda)d\delta_{\mathfrak{Q}} + \int_{E}\frac{1}{\lambda -f(K_{\mathfrak{Q}},\hat q)} d\nu(\hat q) &  \lambda \neq 0,\\
~\\
 \int_{E}\Bigl [\frac{\nu(\mathfrak{Q})}{-f_{X}(K_{\mathfrak{Q}},\hat q)} + \int_{Q}
\frac{1}{-f(K_{\mathfrak{Q}},\hat q )}d\widehat{\nu}\Bigr ]d\delta_{\mathfrak{Q}} \\
\qquad + \int_{E}\frac{1}{-f(K_{\mathfrak{Q}},\hat q)}
d\widehat{\nu}(\hat q) & \lambda =0,
\end{cases}
\end{equation*}
exists and is continuous (here $c(\nu, \lambda) =\frac{f_{X}(
K_{\mathfrak{Q}}, \mathfrak{Q})}{ \lambda -f_X( K_{\mathfrak{Q}},
\mathfrak{Q})} \int_{Q}\frac{d\nu(\hat q)}{\lambda -f(
K_{\mathfrak{Q}}, \hat q)} $ and $\widehat{\nu}= \nu-
\nu(\{\mathfrak{Q}\})\delta_{\mathfrak{Q}}$). Likewise $$ \{
\Bigl(\frac{-f_{X}(K_{\mathfrak{Q}},\mathfrak{Q})}{f_{X}(K_{\mathfrak{Q}},
\mathfrak{Q})-f(K_{\mathfrak{Q}},q)}\delta_{\mathfrak{Q}}
+ \delta_q,f(K_{\mathfrak{Q}},q) \Bigr) \}_{q \in \hat Q}, \; \Bigl(
f_{X}(K_{\mathfrak{Q}},\mathfrak{Q})\delta_\mathfrak{Q},
f_{X}(K_{\mathfrak{Q}},\mathfrak{Q})\Bigr)$$ form
eigenvector-eigenvalue pairs. Hence, the spectrum $$\sigma
(D_{\mu}F(
K_{\mathfrak{Q}}\delta_{\mathfrak{Q}},\delta_{\widehat{q}}))=\{f(K_{\mathfrak{Q}},\hat
 q)\}_{ \hat q \in \widehat{Q}}
\cup \{f_{X}(K_{\mathfrak{Q}},\mathfrak{Q}) \} $$  has negative
growth bound since each of the eigenvalues are negative. Thus, the
result follows from Theorem \ref{pert1}.
\end{proof}
\begin{remark}There is an infinite dimensional version of Theorem 8.8. In it $ U(t)\equiv  D_{\mu}F(
K_{\mathfrak{Q}}\delta_{\mathfrak{Q}},t,\delta_{\widehat{q}}) $, which defines a strongly continuous semigroup, must have a negative growth bound $( r(U(t)) = e^{-\omega t} $ with $\omega >0 ) $ \cite{SmithWalt}. This negative growth bound is needed in order to generate a contractive mapping. We cannot establish this hypothesis for our model in the infinite dimensional case. In the infinite dimensional case $U$ has $0$ in its continuous spectrum. This is due to the fact that $f(K_{\mathfrak{Q}},q)$ is an eigenvalue for every $q$. Due to continuity of $f$, $0$ is in the continuous spectrum.

\end{remark}

 \begin{corollary}
\label{re:finite} Assume that $ Q=\{q_i\}_{i=1}^N$ is a discrete set
and let $x_i(t) =\mu(t)({q_i})$, $f_{n,j}(X) = f_n(X, q_j)$ for n =
1, 2. Then the  system \eqref{M} reduces to the following
differential equations system:

\begin {equation} \left\{\begin{array}{ll}
  \frac{d}{dt}{x_i}(t;u, \lambda) =  \sum_{j=1}^N  f_{1,j}(X(t))p_{ij}x_j(t) - f_{2,i}(X(t))x_i(t)~~~~~~~i = 1, . . . , N \\
x_i(0;u,\lambda) = u_i.
\end{array}\right.\label{discrete}\end{equation}
Moreover, suppose that $P^\epsilon = p^\epsilon_{ i,j}$, that $
||P^{\epsilon} -I || \rightarrow 0 $ as $\epsilon \rightarrow 0 $
and that $f_{n,\cdot}$, n = 1, 2 is continuously differentiable.
Then for $\epsilon$ small enough there exists an equilibrium
$\overline{x}(\epsilon)$ of the ordinary differential equation system \eqref{discrete} with
$\overline{x}(\epsilon)$ converging to $\overline{x}(0) =
(K_{\mathfrak{Q}}, 0, 0, \cdots, 0)^T$ as $ \epsilon \rightarrow 0$.
Furthermore, $\overline{x}(\epsilon)$ is globally asymptotically
stable.
\end{corollary}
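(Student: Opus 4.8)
The plan is to recognize this corollary as the concrete, finite-dimensional restatement of Theorem \ref{re:main}, so that most of the work is a faithful translation between the measure-valued formulation \eqref{M} and the ODE system \eqref{discrete}, after which the existence, convergence, and stability assertions are furnished by the perturbation machinery already invoked in Theorem \ref{re:main} (namely Theorem \ref{pert1}).

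First I would carry out the reduction of \eqref{M} to \eqref{discrete}. On a finite strategy space $Q=\{q_i\}_{i=1}^N$ every finite signed Borel measure has the form $\mu(t)=\sum_{i=1}^N x_i(t)\delta_{q_i}$ with $x_i(t)=\mu(t)(\{q_i\})$ and $X(t)=\mu(t)(Q)=\sum_i x_i(t)$, while the kernel is encoded by the matrix $P=(p_{ij})$ with $p_{ij}=\gamma(q_j)(\{q_i\})$. Evaluating the right-hand side of \eqref{M} on the singleton $E=\{q_i\}$, the birth-plus-mutation integral collapses to $\sum_j f_{1,j}(X)\,p_{ij}\,x_j$ and the mortality integral to $f_{2,i}(X)\,x_i$, which is exactly \eqref{discrete}. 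Under this dictionary the pure selection kernel $\gamma=\delta_{\hat q}$ corresponds to $P=I$ (since then $p_{ij}=\delta_{q_j}(\{q_i\})=\delta_{ij}$), and the hypothesis $\|P^\epsilon-I\|\to 0$ is precisely the statement that the associated kernels $\gamma^\epsilon$ converge to $\delta_{\hat q}$ in $C^{po}$ (uniform convergence suffices because $Q$ is finite). After relabeling so that $q_1=\mathfrak{Q}$ is the unique fittest strategy—which is forced by the finiteness of $Q$ together with $[\mathfrak{Q}]_\mathcal{R}=\{\mathfrak{Q}\}$—I would identify $\overline{x}(0)=(K_{\mathfrak{Q}},0,\dots,0)^T$ with the measure $K_{\mathfrak{Q}}\delta_{\mathfrak{Q}}$, which is the globally attractive equilibrium of the unperturbed ($\epsilon=0$) system by Theorem \ref{re:convergence}. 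The standing assumptions (A1)--(A6) then place us exactly in the setting of Theorem \ref{re:main} with $\widehat{Q}=\{q_2,\dots,q_N\}$, so that theorem supplies, for every $\gamma^\epsilon$ in the neighborhood $U(\delta_{\hat q})$ (i.e.\ for $\epsilon$ small), an equilibrium $\widehat{\mu}(\gamma^\epsilon)$ with $F(\widehat{\mu}(\gamma^\epsilon),\gamma^\epsilon)=\vec{\textbf{0}}$; translating back gives the equilibrium $\overline{x}(\epsilon)$ of \eqref{discrete}, and the continuity built into the implicit-function construction underlying Theorem \ref{pert1} yields $\overline{x}(\epsilon)\to\overline{x}(0)$ as $\epsilon\to 0$.

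For global asymptotic stability I would combine two ingredients. Local Lyapunov stability is inherited from the hyperbolicity established inside the proof of Theorem \ref{re:main}: the spectrum of the linearization at $K_{\mathfrak{Q}}\delta_{\mathfrak{Q}}$ equals $\{f(K_{\mathfrak{Q}},\hat q)\}_{\hat q\in\widehat{Q}}\cup\{f_X(K_{\mathfrak{Q}},\mathfrak{Q})\}$, all of which are negative by (A2), (A4) and (A6), and continuity of the spectrum keeps the linearization at $\overline{x}(\epsilon)$ stable for small $\epsilon$. Global attractivity is exactly the conclusion $\varphi(t;u,\gamma^\epsilon)\to\widehat{\mu}(\gamma^\epsilon)$ of Theorem \ref{re:main}, valid for every initial datum $u$ in the positive cone $\mathcal{U}_+$; local stability plus global attraction then give global asymptotic stability. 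I expect the main obstacle to be precisely this global claim: the perturbation theorem only guarantees \emph{local} persistence and \emph{local} stability of the equilibrium, so the global conclusion cannot come from Theorem \ref{pert1} alone and must be imported from the global convergence statement of Theorem \ref{re:main} (which in turn rests on Theorem \ref{re:convergence} and the \emph{a priori} bounds of Theorem \ref{BS}). A secondary technical point to settle carefully is the domain of attraction: because mutation with $\epsilon>0$ feeds mass into the fittest class even when $x_1(0)=0$, one should verify whether global stability extends to the whole positive cone or only to $\mathcal{U}_+$ (initial vectors with $x_1(0)>0$), and state the conclusion accordingly.
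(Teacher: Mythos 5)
Your proposal is correct and follows essentially the route the paper intends: the paper states this corollary without proof as an immediate consequence of Theorem \ref{re:main}, and your reduction of \eqref{M} to \eqref{discrete} via $p_{ij}=\gamma(q_j)(\{q_i\})$, followed by the appeal to Theorems \ref{re:main} and \ref{pert1} for existence, continuity in $\epsilon$, and global attractivity, is exactly that argument made explicit. Your closing caveat is also well taken — the global stability claim should be read as holding on $\mathcal{U}_+$ (initial data with mass on $\mathfrak{Q}$), since that is the basin supplied by Theorem \ref{re:main}.
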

%%%%%%%%%%%%%%%%%%%%%%%%%%%%%%%%%%%%%%%%%%%%%%%%%%%%%%%%%%%%%%%%%%%%%%%%%%%
%%%%%%%%%%%%%%%%%%%%%%%%%%%%%%%%%%%%%%%%%%%%%%%%%%%%%%%%%%%%%%%%%%%%%%%%%%%%%%%%%%%%%%%%%%%

\section{Towards a General Theory of  Nonunique Fittest}

\subsection{Completions of Measure Spaces and a Fundamental Theorem } \label{AN}
 {
Up to section 4 we have a well-posed model with the property that if
 \begin{equation}\label{weakform} \lim_{t \rightarrow
\infty} \mu(t) = \nu ,\end{equation}
then $\nu \in \mathcal{M}_w(Q)$.  However if there is more than one fittest strategy, this solution $\nu$ will quickly cease to be a globally attracting equilibrium. However, the result from \cite{AFT} intimates that even though the strategy itself ceases to be a globally attracting equilibrium the set of all fittest strategies is a globally attracting equilibrium ``in some sense". This section is a mathematical framework in which we can make sense of the class of fittest strategies being a globally attracting equilibrium.}
 {
So this section is devoted to completing measure spaces in such a way that we  can remedy this situation. But it is worth pointing out that this completion approach also provides a foundation for developing finite dimensional approximations for the infinite dimensional model \eqref{M}.}

\subsection{ The $*$ Functor }
If $(X, \mathfrak{A}_X)$, $(Y,\mathfrak{A}_Y)$ are measurable spaces
let $\mathcal{M}(X)$,  $\mathcal{M}(Y)$ denote the finite signed
measures on $X$, $Y$. Let $*: X \mapsto X^* = \mathcal{M}_w(X)$
and if
$$\phi :X \rightarrow Y $$ is measurable let $$\phi^* : X^*
 \rightarrow Y^*$$ be given by $\phi^*(\mu)(E_Y) =\mu
( \phi^{-1}(E_Y))$. This * is a functor that will map dynamical
systems on $\mathcal{M}_w(X)$ to dynamical systems on
$\mathcal{M}_w(Y)$. Moreover due to the change of variable (see
Theorem \ref{changeofvar} in the Appendix) we see that if $\phi$ is
continuous where $\mathfrak{A}_X , \mathfrak{A}_Y$ denote the Borel
sets on the topological spaces $X$ and $Y$, then $$\phi^*: X^*
\rightarrow Y^* $$ is continuous. We collect this fact as a
proposition.

\begin{proposition} \label{wpreservation} If $ \mu(t) \to \mu_{0} $
 in the $ weak^*$ topology on $ X^*$, then $ \phi^*[\mu(t)] \to
\phi^*[\mu_{0}]$ in the $weak^*$ topology on $Y^*$ where
$\phi^*[\mu(t)](E_Y) = \mu(t)( \phi^{-1}(E_Y))$.
\end{proposition}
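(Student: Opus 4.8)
The plan is to prove that $\phi^*$ is continuous with respect to the weak$^*$ topologies on $X^*$ and $Y^*$. Recall that the weak$^*$ topology on $\mathcal{M}_w(Y)$ is generated by the seminorms (or rather by the linear functionals) $\nu \mapsto \int_Y g \, d\nu$ for $g \in C(Y)$ (or bounded measurable $g$, depending on the pairing). Thus to show $\phi^*[\mu(t)] \to \phi^*[\mu_0]$ in $\mathcal{M}_w(Y)$, it suffices to fix an arbitrary $g \in C(Y)$ and verify the scalar convergence
\[
\int_Y g([y]) \, d\phi^*[\mu(t)](y) \to \int_Y g([y]) \, d\phi^*[\mu_0](y), \qquad t \to \infty.
\]
So the whole statement reduces to a single convergence of integrals, and the mechanism that produces it will be the change of variables formula.

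First I would invoke the change of variables theorem, namely Theorem \ref{changeofvar} from the Appendix, which gives the pushforward identity
\[
\int_Y g \, d(\phi^*\mu) = \int_X (g \circ \phi) \, d\mu
\]
for every $\mu \in \mathcal{M}_w(X)$ and every $g$ in the appropriate class. Applying this to both $\mu(t)$ and $\mu_0$ rewrites the target as
\[
\int_X (g \circ \phi) \, d\mu(t) \to \int_X (g \circ \phi) \, d\mu_0.
\]
The key observation is that since $\phi : X \to Y$ is continuous and $g \in C(Y)$, the composite $g \circ \phi$ lies in $C(X)$, i.e.\ it is a legitimate test function against which weak$^*$ convergence on $X^*$ may be paired. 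By the hypothesis $\mu(t) \to \mu_0$ in the weak$^*$ topology on $X^*$, the pairing of $\mu(t)$ with the fixed continuous function $g \circ \phi$ converges to the pairing of $\mu_0$ with $g \circ \phi$. Chaining the two change-of-variables identities with this scalar convergence closes the argument. Since $g \in C(Y)$ was arbitrary, this is exactly weak$^*$ convergence of $\phi^*[\mu(t)]$ to $\phi^*[\mu_0]$ in $Y^*$.

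There is no serious obstacle here; the result is essentially a formal consequence of the functoriality of $*$ together with the pushforward identity. The only point that requires a moment of care is verifying that $g \circ \phi$ falls within the class of admissible test functions for the weak$^*$ topology on $X^*$ — this is immediate once we note that continuity of $\phi$ and continuity of $g$ make $g \circ \phi$ continuous (and bounded, since $X$ is compact), so no measurability or integrability subtlety arises. Thus the argument is a two-line reduction to the definition of weak$^*$ convergence, with Theorem \ref{changeofvar} supplying the bridge between the two spaces.
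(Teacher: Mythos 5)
Your proof is correct and follows the same route as the paper: the paper justifies this proposition by exactly the observation that the change-of-variables identity (Theorem \ref{changeofvar}) turns the pairing $\langle \phi^*\mu, g\rangle$ into $\langle \mu, g\circ\phi\rangle$, and continuity of $\phi$ makes $g\circ\phi$ an admissible test function in $C(X)$. Your write-up merely makes explicit what the paper leaves as a one-line remark (the only detail worth noting is that Theorem \ref{changeofvar} as stated covers nonnegative $f$ and positive measures, so one formally decomposes $g$ and the signed measure into positive and negative parts, as the paper itself does elsewhere).
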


\subsection{Inverse Limits}
$I$ is called a directed set provided $\forall i,j \in I, \exists k
\in I$, such that $i,j \leq k$. If $(A_i)$ are sets, $(A_i, f_{ij})$
is called a directed system provided for $( i<j<k)$,
$$f_{ij}: A_i \to A_j$$ and $f_{jk}f_{ij} = f_{ik} $ .

Given a directed system one can form the inverse limit. Consider the
subset of $\prod A_i$ given by $$\lim_{\longleftarrow} A_i = \{
(x_i) \in \prod A_i: f_{ij}(x_i) =x_j\}.$$ For example, $( Z_{p^k},
\phi_{kl})$ is such a collection where  $ k
> l $ , $$\phi_{kl}: [x]_{p^k}\mapsto [x]_{p^l} $$ We have

$$Z_p \leftarrow Z_{p^2} \cdots Z_{p^k} \leftarrow Z_{p^{k+1}} \cdots
$$ The inverse limit of $( Z_{p^k}, \phi_{kl})$ is called the p-adic
numbers.
\begin{remark} We assume that all sets are directed and inverse
directed simply means the opposite direction, $a\leq_{op} b $, e.g.,
$a\leq_{op} b $ if and only if $a \geq b$.
\end{remark}
\subsection {Partitions of $Q$ and Completions of Measure Spaces }Let $(Q, \rho)$ be a compact Polish space, and let $
(\prod(Q), \preccurlyeq) $ denote the poset (partially ordered set)
of all partitions on $Q$. If $ \wp_1 , \wp_2 $ are two partitions of
$Q$, $ \wp_1 \preccurlyeq \wp_2 $ if each class in $ \wp_2 $ is a
union of classes in $ \wp_1 $. We denote the partition of singletons
by $\bot$, since it is the ``finest". The single partition where
everyone is related we denote by $\top$ since it is the ``coarsest".
Also, since every function on $Q$ determines a partition, we denote
this partition by $f$ if $f:Q \rightarrow \mathbb{Y}$ i.e., $f$ is
the partition $ \{f^{-1}(\{y\}): y \in \mathbb{Y} \}.$ If $\wp$ is a
partition on $Q$ we denote by $[q]_{\wp} $ the equivalence class of
$q \in Q$ and $ [Q]_{ \wp} =\{ [q]_{\wp}: q \in Q \} $. If
$[Q]_{\wp_1}$ is a topological space and $ \wp_1 \preccurlyeq \wp_2
$, then there is exactly one topology on $[Q]_{\wp_2}$ relative to
which the map $[q]_{\wp_1} \mapsto [q]_{\wp_2}$, denoted by
$\phi_{\wp_1, \wp_2}$, is a quotient map. We imbue $[Q]_{\wp_2}$
with this topology, i.e., $V$ is open in $[Q]_{\wp_2}$ if and only
if $\phi_{\wp_1, \wp_2}^{-1}(V)$ is open in $[Q]_{\wp_1}$. Since
$\bot$ is finer than all partitions, if $[Q]_{\bot}$ is a
topological space, then $[Q]_{\wp}$ is well defined as a topological
space for every partition ${\wp}$. We think of $[Q]_{\bot}$ as $Q$.
We do not distinguish between the two.

 { Let $I$ be a directed set. If
$ \{\wp_{i}\} _{ i \in I}$ is such that $ i \leq j $ if and only if
$\wp_i \preceq \wp_j $, then $\{\wp_{i}\} _{ i \in I}$ is called a
\textbf{distinguished} family of partitions. Any distinguished
family will generate a directed system of abelian groups of measure
spaces. Indeed, consider the following set of commuting diagrams. We
at times suppress notation by defining $\wp_i :=[Q]_{\wp_i}$, $
\phi_{i,j}:= \phi_{\wp_i, \wp_j}$ and $ \phi_{i}:= \phi_{\bot, i}$
(likewise for $\wp_i^*, \phi_{i,j}^*, \phi_{i}^*$), and
$\phi_{\bot,i}^*(\mu) =\mu^*_i.$}

 {
If $i < j$, then we have
\begin{equation}\label{diagram1} \xymatrix{
Q \ar[d]_{\phi_{\bot,i}} \ar[rd]^{\phi_{ \bot,j}} & \\
{\wp_i} \ar[r]_{\phi_{i,j}} & {\wp_j} }\end{equation} and applying
the $*$ functor we get
\begin{equation}\label{diagram2} \xymatrix{
Q^* \ar[d]_{\phi^*_{\bot,i}} \ar[rd]^{\phi^*_{\bot, j}} & \\
\phi_{\bot,i}^*(Q^*)_w \ar[r]_{\phi^*_{i,j}} &
\phi_{\bot,j}^*(Q^*)_w}. \end{equation}
 Since our family is distinguished, $\phi^*_{i,j}$ $: \mu^*_i \mapsto \mu^*_j$ is well defined.
  Also if $ i < j < k $, then
  $\phi^*_{j,k}\phi^*_{i,j} = \phi^*_{i,k}$ and we can form the inverse limit
  of $ (\{\phi_i^*(Q^*)_w\} _{ i \in I},
 \phi^*_{i,j}).$  Let  $ \{\wp_i \} _{i \in I} $ be distinguished and let $\mathbf{\wp}^*
=\prod_{ i \in I} \phi^*_i(Q^*)_{w}$ be given the natural
product vector space structure and the Tychonov topology
(coordinatewise convergence). Then we make the following definition.
\begin{definition}\label{WEAK} We define
 $\overline{\mathcal{M}}_{\wp^*}$ := $ \lim_{\longleftarrow}{ \phi^*_i(Q^*)_w}\cong
  \lim_{\longleftarrow} Q^*/ ker(\phi_i^*)_w
 $ since $Q^*/ ker(
 \phi_i^*) \cong \phi^*_i(Q^*) $ as abelian groups. It is the inverse limit of
 $ (\{\phi_i^*(Q^*)_w\} _{ i \in I},
 \phi^*_{i,j}).$ We call it the \textbf{completion} of $\mathcal{M}$ with respect to the family
$\{ \wp_i \}$, or the \textbf{completion} of $Q^*$ mod $\wp^*$.
\end{definition}}
%\begin{remark}  Note here $\overline{ Q^*/ ker(\phi_i^*)_w}$ means Hausdorf topological
%vector space completion and $ \overline{\phi^*_i(Q^*)_w} \subseteq
%\wp_{i,w}. $ \end{remark}

The name ``completion" is apt in the
following sense. If $G$ is a group and $H_{k} \supseteq H_{k+1}$ is
a sequence of normal subgroups with finite index, then similar to a manner in which
the reals are constructed from the rationals we can complete a group
with respect to a family of subgroups. Indeed we define a sequence
$(g_n)$ in $G$ to be a Cauchy sequence provided for every $R$, there
exists a $N(R)$ such that for all $ n,m \geq N(R)$, $ x_nx_m^{-1}
\in H_R$. If for every $R$, there
exists a $N(R)$ such that for all $ n\geq N(R)$, $ x_n
\in H_R$ we call this a \textbf{null sequence}. The Cauchy sequences form a group under termwise multiplication and the null sequences form a normal subgroup. The factor group, $\widehat{G}$,
is called the completion of $G$ (with respect to the family
$H_k$). From \cite[pg. 52]{Serge2} we have that $$\widehat G \cong
\lim_{\longleftarrow} G/H_k
$$ and $$ G \rightarrow  \lim_{\longleftarrow} G/H_k \quad \text{  by } \quad g \mapsto (g,g,
...).$$

For our purposes we notice that $Q^*$ is an Abelian group. If $I$ is
countable, and a total order such that $\wp_{i+1}\preceq \wp_i$,
then $ Q^* \supseteq ker(\phi_{i}^*) \supseteq ker( \phi^*_{i+1}).$ According to \cite[pg. 52]{Serge2} if each $ker(\phi_{i}^*)$ is of finite index, we can form the completion of $Q^*$ with respect to the
sequence of groups $\{ ker(\phi_i^*) \}$, which is isomorphic to $
\lim_{\longleftarrow} Q^*/ker(\phi_i^*).$ We also have \cite[pg.
52]{Serge2}
$$\mathcal{M}=Q^* \rightarrow \overline{\mathcal{M}}_{\wp^*},$$
given by $\mu \mapsto ( \mu, \mu,\mu,...)$. We summarize a few facts below regarding  $\overline{\mathcal{M}}_{\wp^*}$ in particular we show that it can be used as an extended space which contains the measures.

\subsection{A Fundamental Theorem}
\begin{theorem}\label{FT}
Let $\mathbf{\wp}^* $ be as in the paragraph before definition \ref{WEAK} above. Let
$$ \mathbf{\Phi}^*=\prod \phi_{\bot,i}^*: Q_w^* \rightarrow
\overline{\mathcal{M}}_{\wp^*}$$ be such that $ \mu \mapsto \prod
\phi_{\bot,i}^*( \mu)$. Then for $\mathbf{\Phi}^*$ we have:
\begin{enumerate}
\item  $\mathbf{\Phi}^*$ is a $weak^*$ continuous algebraic (vector space)
homomorphism.
\item  $\mathbf{\Phi}^*$ preserves the cone.
\item $Q^*/ ker\{ \Phi^*\} $ is isomorphic to a subspace
of $\overline{\mathcal{M}}_{\wp^*}$.
\item  There exists a countable
sequence $\{P_i\}_{i=1}^\infty$ of finite partitions such that $
P_{i+1} \preceq P_i $, each element of $P_i$ is a $G_{\delta} $ (countable intersection of open sets) with
diameter smaller than $\frac{1}{i} $, and $\Phi^*$ is an embedding
(algebraic).
\end{enumerate}

\end{theorem}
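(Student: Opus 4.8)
The plan is to handle the four assertions in turn, the first three being quick structural consequences of the functoriality of $*$ and the last carrying the real content. Throughout I will first record that $\mathbf{\Phi}^*$ genuinely lands in the inverse limit: since the family is distinguished, diagram \eqref{diagram1} gives $\phi_{i,j}\circ\phi_{\bot,i}=\phi_{\bot,j}$ for $i<j$, and by functoriality of the pushforward ($(\psi\circ\phi)^*=\psi^*\circ\phi^*$) we get $\phi^*_{i,j}\circ\phi^*_{\bot,i}=\phi^*_{\bot,j}$, so $(\phi^*_i\mu)_{i\in I}$ satisfies the compatibility condition defining $\overline{\mathcal{M}}_{\wp^*}$ in Definition \ref{WEAK}.

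For (1), linearity is coordinatewise: each $\phi^*_i(\mu)(E)=\mu(\phi_i^{-1}(E))$ is linear in $\mu$, so each coordinate, and hence the product map, is a vector-space homomorphism. Weak$^*$ continuity is also coordinatewise, since the inverse limit carries the Tychonov topology, for which a map into the product is continuous iff each coordinate is; and each $\phi^*_i$ is weak$^*$ continuous by Proposition \ref{wpreservation} (cf.\ Theorem \ref{changeofvar}), as each quotient map $\phi_{\bot,i}$ is continuous. For (2), if $\mu\ge 0$ then $\phi^*_i(\mu)(E)=\mu(\phi_i^{-1}(E))\ge 0$ for every $E$, so every coordinate is a positive measure and $\mathbf{\Phi}^*$ preserves the cone. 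For (3), apply the first isomorphism theorem to the homomorphism $\mathbf{\Phi}^*$ from (1): its image is a subspace of $\overline{\mathcal{M}}_{\wp^*}$ and $Q^*/\ker(\mathbf{\Phi}^*)\cong \mathrm{im}(\mathbf{\Phi}^*)$.

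The substance is (4), which splits into a construction and an injectivity argument. For the construction I would use that $(Q,\rho)$ is compact, hence totally bounded and separable. For each $i$, cover $Q$ by finitely many open balls of radius $<\tfrac{1}{2i}$ and disjointify them ($C_1=B_1$, $C_k=B_k\setminus\bigcup_{\ell<k}B_\ell$, discarding empties) to obtain a finite Borel partition with all cells of diameter $<\tfrac1i$; each cell is an open set intersected with a closed set, and in a metric space both open and closed sets are $G_\delta$, so each cell is $G_\delta$. To force the refinement, define $P_{i+1}$ as the common refinement $\{A\cap B: A\in P_i,\ B\in R_{i+1}\}$ of $P_i$ with a fresh partition $R_{i+1}$ of mesh $<\tfrac1{i+1}$; intersections of $G_\delta$ sets are $G_\delta$ and $\mathrm{diam}(A\cap B)<\tfrac1{i+1}$, so all four listed properties hold, and the chain $\cdots\preceq P_{i+1}\preceq P_i\preceq\cdots$ is totally ordered, hence a distinguished (inverse-directed) family to which Definition \ref{WEAK} applies.

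For injectivity I would first note that, because $P_i$ is finite, the preimages $\phi_i^{-1}(E)$ (as $E$ ranges over subsets of $[Q]_{P_i}$) are exactly the finite unions of cells of $P_i$; thus $\phi^*_i(\mu)=\phi^*_i(\nu)$ is equivalent to $\mu$ and $\nu$ agreeing on the algebra $\mathcal{A}_i$ generated by $P_i$. Hence $\mathbf{\Phi}^*(\mu)=\mathbf{\Phi}^*(\nu)$ forces $\mu=\nu$ on $\mathcal{A}:=\bigcup_i \mathcal{A}_i$, which is an algebra because the $P_i$ are nested. The key geometric point is that $\sigma(\mathcal{A})=\mathcal{B}(Q)$: given a ball $B(x_0,r)$ and $y\in B(x_0,r)$, once $\tfrac1i<r-\rho(x_0,y)$ the cell of $P_i$ containing $y$ sits inside $B(x_0,r)$, so $B(x_0,r)$ is a countable union of cells and lies in $\sigma(\mathcal{A})$; since $Q$ is separable, $\mathcal{B}(Q)$ is generated by balls. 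To pass from agreement on $\mathcal{A}$ to agreement on $\sigma(\mathcal{A})$ for the finite signed measures, I would set $\lambda=\mu-\nu$ and verify that $\{E:\lambda(E)=0\}$ is a Dynkin ($\lambda$-)system containing the $\pi$-system $\mathcal{A}$; the $\pi$--$\lambda$ theorem then gives $\lambda\equiv 0$ on $\mathcal{B}(Q)$, so $\mathbf{\Phi}^*$ is injective, i.e.\ an algebraic embedding. The main obstacle is exactly this last step: arranging the cells to be simultaneously finite, nested, $G_\delta$, and of vanishing mesh while keeping enough of them to separate Borel measures, and then invoking a uniqueness-of-extension theorem valid for signed (not merely positive) measures.
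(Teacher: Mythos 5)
Your proposal is correct, and parts (1)--(3) together with the construction of the partitions $\{P_i\}$ match the paper (which dismisses (1)--(3) as trivial and cites Arveson for the standard construction of nested finite $G_\delta$ partitions of vanishing mesh). Where you genuinely diverge is the injectivity argument in (4). The paper's proof is a weak$^*$ approximation argument: it picks a point $a_{i,k}$ in each cell $P_{i,k}$, forms the discrete measures $\mu^{(i)}=\sum_k \mu(P_{i,k})\delta_{a_{i,k}}$, shows via uniform continuity of each $f\in C(Q)$ that $\mu^{(i)}\to\mu$ in the weak$^*$ topology, and concludes that if $\mu_i^*=\nu_i^*$ for all $i$ then $\mu^{(i)}=\nu^{(i)}$ for all $i$, whence $\mu=\nu$ as the common weak$^*$ limit (first for positive measures, then for signed ones by Jordan decomposition and linearity). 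You instead argue measure-theoretically: agreement of the pushforwards forces $\mu$ and $\nu$ to agree on the increasing union $\mathcal{A}=\bigcup_i\mathcal{A}_i$ of the finite algebras generated by the $P_i$, the vanishing mesh shows every open ball is a countable union of cells so that $\sigma(\mathcal{A})=\mathcal{B}(Q)$, and the $\pi$--$\lambda$ theorem applied to $\lambda=\mu-\nu$ gives $\mu=\nu$ setwise. Both arguments are sound. Yours is more self-contained (it needs no identification of $\mathcal{M}(Q)$ with $C(Q)^*$, no Hausdorffness of the weak$^*$ topology, and handles signed measures in one stroke rather than via decomposition into positive parts), and it yields the formally stronger conclusion of setwise equality. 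The paper's route, on the other hand, produces as a byproduct the explicit weak$^*$-convergent discrete approximations $\mu^{(i)}$, which are exactly what Definition \ref{NA} and the approximation scheme \eqref{approximation} reuse for the finite-dimensional reduction and the proposed numerical method; your proof would need that convergence statement established separately if those later applications are to go through.
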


\begin{proof} The proofs of
1-3 are trivial and will be left to the reader. As for 4, the
existence of such a $\{P_i\}_{i=1}^\infty$ is a standard
construction and can be found in \cite[pg.66]{Arveson}. We need only
show that such a partition generates an embedding on the cone. So
let $(P_i,\{a_{i,n}\}_{n=1}^{n=N_i})_{i\in Z}$ be such that each
$P_i$ is such a partition, $N_i$ is the number of elements in the
$ith$ partition and $a_{i,n}$ is an element of the $nth$ partition
element of $P_i$.

We notice that $\mu_i^* =
\sum_{k=1}^{N_i}\mu(P_{i,k})\delta_{P_{i,k}}$ and from here we can
form an $ith$ approximation measure for $\mu$:

$$\mu^{(i)} = \sum_{k=1}^{N_i}\mu(P_{i,k})\delta_ {a_{i,k}} .$$
 Then
$\mu^{(i)} \rightarrow \mu$ in the $weak^*$ topology. Hence if $\mu$
and $\nu$ are positive and such that $\mu_i^*=\nu_i^*$ for all $i$,
then $ \mu =\nu $ and $\Phi^*$ is an embedding.

 Let $f \in C(Q)$. We must show that $
\rho_f(\mu^{(i)} -\mu) \rightarrow 0$, as $i \rightarrow \infty.$
Since $Q$ is compact, $f$ is uniformly continuous. So if $\epsilon
>0$, there is a $\delta
>0$ such that $| f(q) -f(\hat q)| < \epsilon$, if $\rho(q, \hat q) <
\delta.$ Next pick $i$ such that $\frac{1}{i} < \delta$. Then we
have the following estimates:
$$ \int_{P_{i,k}}(f(a_{i,k})-\epsilon) d\mu \leq \int_{P_{i,k}}f(q)d\mu
\leq \int_{P_{i,k}}(f(a_{i,k})+ \epsilon) d\mu .$$ Hence

$$ \mu(P_{i,k})(f(a_{i,k})-\epsilon)  \leq \int_{P_{i,k}}f(q)d\mu
\leq \mu(P_{i,k})(f(a_{i,k})+ \epsilon) $$ and

$$ \sum _{k=1}^{N_i}\mu(P_{i,k})(f(a_{i,k})-\epsilon)  \leq \sum _{k=1}^{N_i} \int_{P_{i,k}}f(q)d\mu
\leq \sum _{k=1}^{N_i}\mu(P_{i,k})(f(a_{i,k})+ \epsilon). $$ Since
the $P_{i,k}$ 's form a partition,

$$  -\epsilon \mu(Q) \leq \sum _{k=1}^{N_i}[-\mu(P_{i,k})f(a_{i,k})+
\int_{P_{i,k}}f(q)d\mu] \leq \mu(Q) \epsilon. $$ Hence $\rho_f(\mu^{(i)} -\mu) < \epsilon \mu(Q)$ for arbitrary $ \epsilon
>0.$
We get our full result by decomposing each finite signed measure
into positive and negative parts and noticing that $\phi_i^*$ is
linear.
\end{proof}

Theorem \ref{FT} prompts the following definition.
\begin{definition}\label{NA} Let $\wp^*$ be as above. Let $(\mu_i^*)_{i\in I}$ be the
image of $\mu$ under $\Phi^*$. Then each $\mu_i^*$ is called the
\textbf{ith mod $\wp^*$ approximation} to $\mu$.
\end{definition}
Definition  \ref{NA} is particularly interesting in light of part 4
in Theorem \ref{FT}. This theorem implies that with this setup one
can reduce the infinite dimensional problems of asymptotic analysis
and numerical approximation to countably many finite problems.  For
each $i$, studying $\phi_{\bot,i}^*(\mu)[t]= \mu^*_i[t]$, reduces to
studying a finite system of ordinary differential equtions since $\wp_i$ is finite. If we
choose each $a_{i,k}$ to be a member of some countable dense subset
like the diadic rationals, then we can use
\begin{equation} \label{approximation} \mu^{(i)}(t) =
\sum_{k=1}^{N_i}\mu(t)(P_{i,k})\delta_ {a_{i,k}} \end{equation} as
an approximation which we can numerically compute. Equation
\eqref{approximation} will generate a finite system of ordinary differential equations. So for
$i$ large enough, if we use standard tools to understand this ordinary differential equation
system, then we have a good approximation to the asymptotic behavior
of $\mu(t)$ and each such subsystem is an approximation to $\mu(t)$.
We will illustrate this technique for numerical analysis in future
work.
%%6666666666666666666666666666666666666666666666666666666666666666666666666666666666666666666
%666666666666666666666666666666666666666666666666666666666666666666666666666666666666666666666666
 \subsection{ Weak Asymptotic Limits}
The fundamental Theorem \ref{FT} prompts the following definition.

\begin{definition} Let $\wp^*$ be as above. If there exists a family $(\nu_i)_{i\in I} \in \mathbf{\wp}^*$
such that
$$ \lim_{t \rightarrow \infty} \phi^*_{\bot,i} \mu(t) = \nu_i $$
for all $i$, then we call $(\nu_i)_{i\in I}$ the \textbf{asymptotic
limit mod} ${\wp^*} $ of $ \mu(t) $.
\end{definition}
From Proposition \ref{wpreservation} we see that if $\mu(t)$ has an
asymptotic limit in $Q^*$, then it has one in $\wp_i^*$ for each
$i$. This shows that the \textbf{asymptotic limit mod} $\wp^*$ is
indeed a generalized or weak solution, since it agrees with the
classical one if a classical one exists.
 {
Even if $\Phi^*$ is not an embedding, one cannot understand the
large dimensional ordinary differential equation system, or if $\mu(t)$ does not have a limit,
one can still utilize Theorem \ref{FT} to find very meaningful
information about the asymptotic and numerical behavior of the
model. In other words even if $\Phi^*$ is not an embedding, for
every $\wp^*$, the mod $\wp^*$ limit tells us something. These
solutions are called weak or generalized solutions and there are
some important and canonical solutions of these type.}

 {
So the weak problem becomes. Can we find $\{\wp_i\}$ such that we
can solve the weak problem? Solve
$$\lim_ {t \rightarrow \infty} \Phi^* \mu = \mathfrak{\nu}, \quad
\nu \in \overline{\mathcal{M}}_{\wp^*}.$$ as a global equilibrium.}
 {
 Given an EGT model, often a natural partition will emerge and there are some that are canonical.
In particular, for pure replicator dynamics there were three canonical limits.
 They are the mod $\bot$, mod $\top$, and the mod $
\mathcal{R}(0, \cdot)$ limits. The mod $\bot$ limit is clearly the
asymptotic limit of the model, if it exists. The mod $\top$ limit
will always give the equilibrium of the total population. The mod
$\mathcal{R}(0, \cdot)$ is important specifically for pure replicator dynamics as it will always converge to a
Dirac mass centered at the fittest class.}

%7777777777777777777777777777777777777777777777777777777777777777777777777777777777777777777777
%777777777777777777777777777777777777777777777777777777777777777777777777777777777777777777777
\section{ \bf Concluding Remarks} We studied the long time behavior of an EGT (selection-mutation)
 model on the space of measures. We showed that in the pure replication case the asymptotic behavior manifest itself
 into a Dirac measure centered at the fittest class (even if it is not unique). We also showed that if the initial
 condition and the selection-mutation kernel are discrete then there is a globally asymptotically stable equilibrium that attracts
 all solutions; provided mutation is small.

Epidemic models which consider the dynamics of multi-strain pathogens have been studied in the literature (e.g., \cite{AA1,BT}). These models
have been formulated as systems of ordinary differential equations where infected individual are distributed over a set of $n$ classes each carrying a particular
strain of finite and discrete strain (strategy) space. However, often a continuous (strategy) space is needed. For example, think
of particular disease that has transmission rate $\beta$ with possible values in the interval $[\underline \beta,\overline \beta]$. Then infected individuals
are distributed over a strain space with transmission taking values in this interval. The formulation and the general theory developed here have potential application in treating such distributed rate epidemic models.

While the above are the main contributions, it is worth mentioning that
the present work also provides a foundation for developing  numerical
methods to compute solutions of these models and to estimate
parameters from field data. For numerical analysis and parameter estimation the completion of the space of measures
presented here
reduces the investigation of the infinite dimensional problem to countably many
finite dimensional problems (see Theorem 3.5). These ideas will be explored in more
details elsewhere.\\

 \vspace{0.0 in}
 \noindent {\bf Acknowledgements:} The authors would like to thank Horst
 Thieme for thorough reading of an earlier version of this
 manuscript and for the many useful comments, in particular, for suggesting Theorem
 \ref{persistence}. The authors would also like to thank their colleague Ping Ng
 for helpful discussions. This work was partially supported by the National Science Foundation under grant \# DMS-0718465.

\section{ \bf Appendix}

 For the convenience of the reader, we next state a few known results that are used in our analysis.

 The next theorem is concerned with
\begin{equation}\label{GLEQ}
 x'=f(t,x),~~ x(t_0) = x_0, \end{equation} where $f \in C[\mathbb{R}_+ \times E,
E]$, $E$ being a Banach space.

Consider
\begin{equation}\label{finite}
\dot{x} = f(x, \lambda),
\end{equation}
where $ f: U \times \Lambda \rightarrow \mathbb{R}^n $ is
continuous, $ U \subseteq \mathbb{R}^{n}$, $\Lambda \subseteq
\mathbb{R}^{k}$ and $ D_{x}f (x,\lambda) $ is continuous on $ U
\times \Lambda $. We write $x(t, z, \lambda) $ for the solution of
\eqref{finite} satisfying $x(0)=z.$
\begin{theorem}\label{pert1} \cite{SmithWalt}
Assume that $(x_0, \lambda_{0}) \in U\times\Lambda $, $x_0 \in
int(U)$ , $f(x_0, \lambda_0) =0$, all eigenvalues of $D_{x}f(x_0,
\lambda_0)$  have negative real part, and $x_0$ is globally
attracting in $U$ for solutions of \eqref{finite} with $\lambda
=\lambda_0 $. If
\begin{description}
\item (H1) there exists a compact set $ D \subseteq U$  such that
for each $ \lambda \in \Lambda$ and each $ z \in U$, $x(t,z,
\lambda) \in D$ for all large t,
\end{description}
then there exists $\epsilon >0$ and a unique point $\widehat{x}(
\lambda) \in U$ for $\lambda \in B_{\lambda}( \lambda_0, \epsilon)$
such that $f(\widehat{x}( \lambda),\lambda)=0$ and $x(t,z, \lambda)
\rightarrow \widehat{x}(\lambda)$, as $t\rightarrow \infty$ for all
$z \in U$.

\begin{theorem} \cite[pg. 393]{Royden}\label{changeofvar} Let $\varphi$ be a measurable point mapping of the
measure space $<X,\mathfrak{A},\mu>$ into the measurable space
$<Y,\mathfrak{B}> $. Let $\Phi^*$ be the induced set mapping of
$\mathfrak{B}$ into $\mathfrak{A}$. Then for each nonnegative
measurable function $f$ on $Y$ we have $$ \int_Y f d\Phi^{*}\mu =
\int_{X} f\circ\varphi d\mu .$$ Here $ \Phi^*\mu( E_Y) = \mu
(\Phi(E_Y)) $ and $ \Phi(E_Y) = \varphi^{-1}(E_Y)). $
\end{theorem}

\begin{theorem} \label{AdjointTheorem} \cite[pg. 140]{Munkres}
Let $g: X \rightarrow Z$ be a surjective continuous map. Let $ X^*$
be the following collection of subsets of $X$,  $$ X^* = \{
g^{-1}(\{z\}): z \in Z \} .$$

Give $X^*$ the quotient topology.

\begin{itemize}
\item[(a)]If $Z$ is Hausdorff, so is $X^*$.
\item [(b)] The map $g$ induces a bijective continuous map $f: X^*
\rightarrow Z $, which is a homeomorphism if and only if $g$ is a
quotient map.
$$\xymatrix{
X \ar[d]_{p} \ar[rd]^g & \\
X^* \ar[r]_f & Z }$$

\end{itemize} \end{theorem}
\end{theorem}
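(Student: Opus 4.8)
The plan is to read Theorem \ref{re:main} as a perturbation-of-a-globally-attracting-equilibrium statement and to apply the Smith--Waltman theorem (Theorem \ref{pert1}) in the kernel parameter $\gamma$. Because $Q=\widehat{Q}\cup[\mathfrak{Q}]_\mathcal{R}$ with $\widehat{Q}$ finite, the strategy space is finite, so $\mathcal{M}=\mathrm{span}\{\delta_q:q\in Q\}$ is finite-dimensional and \eqref{M} becomes a finite ODE system $\dot x=f(x,\lambda)$ as in \eqref{finite}. The perturbation parameter $\lambda=\gamma$ then ranges over the finite-dimensional space of row-stochastic $N\times N$ matrices (the shape $C^{po}$ takes for finite $Q$), and the unperturbed value is $\lambda_0=\delta_{\hat q}$, the identity. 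The candidate unperturbed equilibrium is $x_0=K_{\mathfrak{Q}}\delta_{\mathfrak{Q}}$.

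First I would verify the standing hypotheses of Theorem \ref{pert1}. That $F(K_{\mathfrak{Q}}\delta_{\mathfrak{Q}},\delta_{\hat q})=\vec{\textbf{0}}$ is immediate, since $\mathcal{R}(K_{\mathfrak{Q}},\mathfrak{Q})=1$ forces the birth and death terms at $\delta_\mathfrak{Q}$ to balance. Global attractivity of $x_0$ on $\mathcal{U}_+$ under pure selection is exactly Theorem \ref{re:convergence}(3), invoking $[\mathfrak{Q}]_\mathcal{R}=\{\mathfrak{Q}\}$ and $\mathfrak{Q}\in\mathrm{supp}(u)$. The compactness condition (H1) follows from Theorem \ref{BS}: with $k_{\mathfrak{q}}>0$ and $K_\mathfrak{Q}<\infty$, the bounds \eqref{limbound}--\eqref{limsupbound} confine every trajectory to a fixed compact set for all large $t$, uniformly in $\gamma$.

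The substantive computation is the spectral hypothesis: all eigenvalues of $D_\mu F(K_{\mathfrak{Q}}\delta_{\mathfrak{Q}},\delta_{\hat q})$ must have negative real part. Differentiating the two integral terms of \eqref{M} in $\mu$ and evaluating at $x_0$, I expect $D_\mu F$ to split into a multiplication operator $\nu\mapsto f(K_{\mathfrak{Q}},\cdot)\,\nu$ with multiplier $f=f_1-f_2$, plus a rank-one operator $\nu\mapsto K_{\mathfrak{Q}}f_X(K_{\mathfrak{Q}},\mathfrak{Q})\,\nu(\textbf{1})\,\delta_\mathfrak{Q}$ coming from the total-population dependence. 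The multiplication part alone would contribute eigenvalues $\{f(K_{\mathfrak{Q}},q)\}_{q\in Q}$; but $\mathcal{R}(K_{\mathfrak{Q}},\mathfrak{Q})=1$ makes $f(K_{\mathfrak{Q}},\mathfrak{Q})=0$, so the direction $\delta_\mathfrak{Q}$ would carry a marginal zero eigenvalue were it not for the rank-one term, which shifts precisely this eigenvalue to $f_X(K_{\mathfrak{Q}},\mathfrak{Q})$. Solving the resolvent equation explicitly should give $\sigma(D_\mu F(K_{\mathfrak{Q}}\delta_\mathfrak{Q},\delta_{\hat q}))=\{f(K_{\mathfrak{Q}},\hat q)\}_{\hat q\in\widehat{Q}}\cup\{f_X(K_{\mathfrak{Q}},\mathfrak{Q})\}$. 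Each eigenvalue is negative: (A4) gives $\mathcal{R}(K_{\mathfrak{Q}},\hat q)<\mathcal{R}(K_{\mathfrak{Q}},\mathfrak{Q})=1$, hence $f(K_{\mathfrak{Q}},\hat q)<0$ for $\hat q\in\widehat{Q}$, while (A6) is exactly the statement $f_X(K_{\mathfrak{Q}},\mathfrak{Q})<0$. It is this cancellation of the otherwise-zero eigenvalue by the density-dependence term that makes (A6) the decisive hypothesis.

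The main obstacle I anticipate is conceptual rather than computational: $x_0=K_{\mathfrak{Q}}\delta_{\mathfrak{Q}}$ lies on the boundary of the positive cone $\mathcal{M}_+$, not in its interior, so the literal hypothesis $x_0\in\mathrm{int}(U)$ of Theorem \ref{pert1} fails. Following the remark preceding the theorem, I would replace the Fr\'echet derivative by the one-sided derivative into the positive cone and run the Smith--Waltman argument with this cone-directional linearization \cite{SmithWalt}; this is legitimate because the flow leaves $\mathcal{M}_+$ invariant and the contraction estimate only tests admissible directions. With every hypothesis verified, Theorem \ref{pert1} furnishes an $\epsilon>0$, a unique equilibrium $\widehat{\mu}(\gamma)\in\mathcal{M}_+$ for each $\gamma$ in the ball $U(\delta_{\hat q})=B(\delta_{\hat q},\epsilon)$, and the convergence $\varphi(t;u,\gamma)\to\widehat{\mu}(\gamma)$ as $t\to\infty$ for all $u\in\mathcal{U}_+$, as claimed.
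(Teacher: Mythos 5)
Your proposal does not prove the statement in question. The statement is Theorem \ref{pert1} itself --- the Smith--Waltman perturbation theorem --- together with the change-of-variables theorem (Theorem \ref{changeofvar}) and the quotient-space theorem (Theorem \ref{AdjointTheorem}) nested inside it. These are external results quoted from \cite{SmithWalt}, \cite{Royden} and \cite{Munkres}; the paper offers no proof of them, only citations. What you have written is instead a proof of Theorem \ref{re:main}, the small-mutation discrete result, \emph{by applying} Theorem \ref{pert1}. As an argument for the assigned statement this is circular: you invoke the very theorem you were asked to establish, both when you set up the plan and when you conclude.

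A genuine proof of Theorem \ref{pert1} would run along entirely different lines. Since all eigenvalues of $D_xf(x_0,\lambda_0)$ have negative real part, this matrix is invertible, so the implicit function theorem yields a unique continuous branch $\widehat{x}(\lambda)$ of zeros of $f(\cdot,\lambda)$ near $(x_0,\lambda_0)$; continuity of $D_xf$ then gives local exponential stability of $\widehat{x}(\lambda)$ with a basin of attraction of size uniform in $\lambda$ near $\lambda_0$. The substantive step --- the actual content of \cite{SmithWalt} --- is upgrading local to global attractivity using (H1): since every trajectory eventually enters the fixed compact set $D$, and every point of $D$ is attracted to $x_0$ when $\lambda=\lambda_0$, a compactness and continuous-dependence (on finite time intervals) argument shows that for $\lambda$ sufficiently close to $\lambda_0$ every trajectory starting in $U$ must enter the uniform local basin of $\widehat{x}(\lambda)$, whence global convergence. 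None of this machinery appears in your proposal, nor do the (separate) proofs of the Royden and Munkres theorems bundled into the statement. For what it is worth, the application you do carry out --- the zero check at $K_{\mathfrak{Q}}\delta_{\mathfrak{Q}}$, the spectral computation $\sigma(D_\mu F)=\{f(K_{\mathfrak{Q}},\hat q)\}_{\hat q\in\widehat{Q}}\cup\{f_X(K_{\mathfrak{Q}},\mathfrak{Q})\}$ with the rank-one density-dependence term removing the marginal zero eigenvalue, the role of (A6), and the cone-directional derivative to handle the boundary equilibrium --- tracks the paper's own proof of Theorem \ref{re:main} almost exactly; but that is a different theorem from the one assigned, and proving it presupposes rather than establishes Theorem \ref{pert1}.
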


\newpage

  \begin{figure}[htbp] \label{Fig1}
 \resizebox{12cm}{!}{\includegraphics{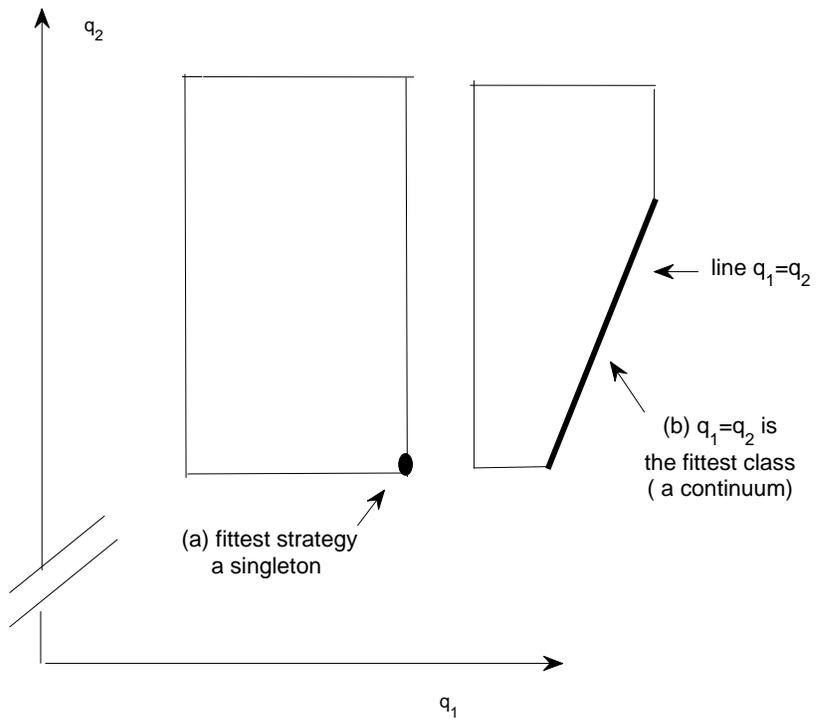}}
 \caption{ Two examples of strategy spaces.}
\label{strategyspace}
\end{figure}

\begin{figure}[htbp] \label{Fig2}
 \resizebox{12cm}{!}{\includegraphics{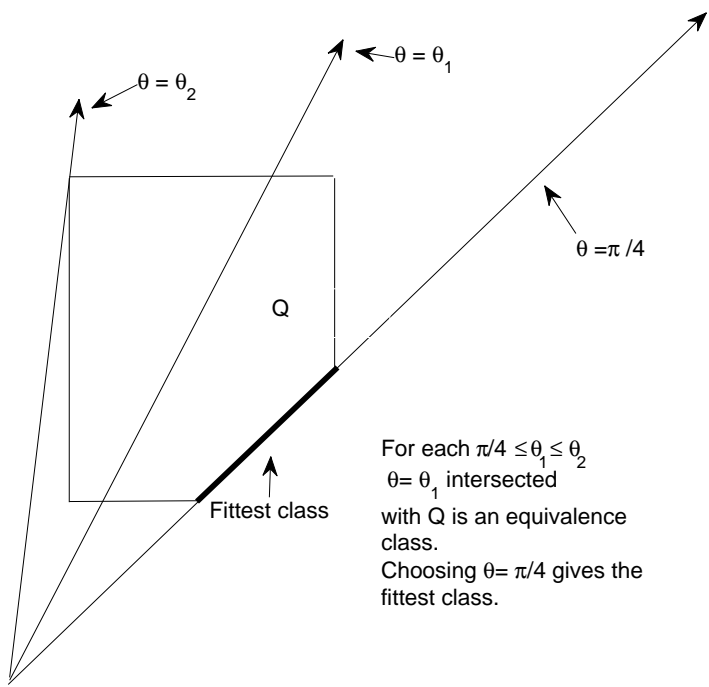}}
 \caption{Example of the $[Q]_{\mathcal{R}}$ partition.}
\label{partition}
\end{figure}
\end{document}